\newtheorem{theorem}{\textbf{Theorem}}[section]
\newtheorem{lemma}{\textbf{Lemma}}[section]
\newtheorem{proposition}{\textbf{Proposition}}[section]
\newtheorem{corollary}{\textbf{Corollary}}[section]
\newtheorem{remark}{\textbf{Remark}}[section]
\newtheorem{definition}{\textbf{Definition}}[section]
\def\be{\begin{equation}}
\def\ee{\end{equation}}
\def\bea{\begin{eqnarray}}
\def\eea{\end{eqnarray}}
\def\bt{\begin{theorem}}
\def\et{\end{theorem}}
\def\bl{\begin{lemma}}
\def\el{\end{lemma}}
\def\br{\begin{remark}}
\def\er{\end{remark}}
\def\bp{\begin{proposition}}
\def\ep{\end{proposition}}
\def\bc{\begin{corollary}}
\def\ec{\end{corollary}}
\def\bd{\begin{definition}}
\def\ed{\end{definition}}
\def\s0t{\sup _{0\leq \tau\leq t}}
\def\C0T{C([0,T];\,}
\def\DAS{D( A^{\frac{S}{2}})}
\def\DAS1{D( A^{ \frac{S+1}{2}})}
 \def\non{\nonumber }
\def \au {\rm}
\def \no#1#2#3 {{\bf #1} (#3), #2.}
\def \eds#1#2#3 {#1, #2, #3.}
\begin{document}

\title{Asymptotic Behavior for a Nematic Liquid Crystal Model
with Different Kinematic Transport Properties}

\author{{\sc
Hao Wu} \footnote{Shanghai Key Laboratory for Contemporary Applied Mathematics and School of Mathematical Sciences, Fudan University,
200433 Shanghai, China, Email: \textit{haowufd@yahoo.com}.}, {\sc
Xiang Xu} \footnote{Department of Mathematics, Penn State
University, State College, PA 16802, Email:
\textit{xu\_x@math.psu.edu}.} \ and {\sc Chun Liu}
\footnote{Department of Mathematics, Penn State University, State
College, PA 16802, Email: \textit{liu@math.psu.edu}.} }

\date{\today}

\maketitle


\begin{abstract}
We study the asymptotic behavior of global solutions
to hydrodynamical systems modeling the nematic liquid crystal flows
under kinematic transports for molecules of  different shapes. The coupling system consists
of Navier--Stokes equations and kinematic
transport equations for the molecular orientations. We prove the
convergence of global strong solutions to single steady states as time
tends to infinity as well as estimates on the convergence rate both in
$2D$ for arbitrary regular initial data and in $3D$ for certain
particular cases.
\medskip

\noindent \textbf{Keywords}: Liquid crystal flows, Navier--Stokes
equation, kinematic transport, uniqueness of asymptotic limit, \L ojasiewicz--Simon approach. \\
\textbf{AMS Subject Classification}: 35B40, 35B41, 35Q35, 76D05.
\end{abstract}

\section{Introduction}
We consider the following evolutionary system that models the dynamics of
nematic liquid crystal flows (cf. e.g., \cite{LLZ07, SL08, dG, E61})
 \bea
 && v_t+v\cdot\nabla v-\nu \Delta v+\nabla P\non\\
 && \ \ =-\lambda
 \nabla\cdot[\nabla d\odot\nabla d+\alpha(\Delta d-f(d))\otimes d-(1-\alpha)d\otimes (\Delta d-f(d)) ],\label{1}\\
 && \nabla \cdot v = 0,\label{2}\\
 && d_t+(v\cdot\nabla) d-\alpha  (\nabla v) d+(1-\alpha)(\nabla^T v) d =\gamma(\Delta d-f(d)),\label{3}
 \eea
in $Q\times(0,\infty)$. Here, $Q$ is a unit square in
$\mathbb{R}^2,\ \!$ (or a unit box in $\mathbb{R}^3$), the more general case $Q=\Pi_{i=1}^n(0,L_i)$, $n=2,3$, with different periods $L_i$ in different directions can be treated in a similar way. $v$
is the velocity field of the flow and $d$ represents the averaged
macroscopic/continuum molecular orientations in $\mathbb{R}^n$. $P$
is a scalar function representing the hydrodynamic pressure. The
constants $\nu, \lambda$ and $\gamma$ stand for viscosity, the
competition between kinetic energy and potential energy, and
macroscopic elastic relaxation time (Deborah number) for the
molecular orientation field, respectively. The constant $\alpha\in
[0,1]$ is a parameter related to the shape of the liquid crystal
molecule (cf. Remark \ref{shape}). $\nabla d\odot \nabla d$ denotes the $n\times n$ matrix
whose $(i,j)$-th entry is given by $\nabla_i d\cdot \nabla_j d$, for
$1\leq i,j\leq n$. $\otimes$ is the usual Kronecker product, e.g.,
$(a\otimes b)_{ij}=a_ib_j$ for $a,b \in \mathbb{R}^n$.
The penalty function $f(d)=\frac{1}{\eta^2}(|d|^2-1)d: \mathbb{R}^n\mapsto \mathbb{R}^n$
with $\eta\in (0, 1]$ was introduced to approximate the strict unit-length constraint $|d| = 1$, which is
due to liquid crystal molecules being of similar size (cf.
\cite{LL95}). This approximation fits well with the general theory
of Landau's order parameter (cf. \cite{Le79}) and the
Ginzburg--Landau type energy is also consistent with the model on
variable degree of orientation (cf. \cite{E91}). It is obvious that
$f(d)$ is the gradient of the scalar valued function
$F(d)=\frac{1}{4\eta^2}(|d|^2-1)^2:\mathbb{R}^n\mapsto \mathbb{R}$.

The hydrodynamic theory of liquid crystals due to Ericksen and
Leslie was developed during the period of 1958 through 1968 (cf.
e.g., \cite{E61,Le68}). Since then there has been a remarkable
research in the field of liquid crystals, both theoretically and
experimentally (cf.
\cite{CS,dG,E87,E91,HK87,Le79,LD,LL95,LL96,LL01,LLZ07,LWa01,LSY07,SL08,LW08}
and references therein). System \eqref{1}--\eqref{3} is a simplified
version of the general Ericksen--Leslie system for the hydrodynamics
of nematic liquid crystals (cf. \cite{E61,E87,HK87,Le68,Le79,SL08}).
It is a macroscopic continuum description of time evolutions
of these materials influenced by both the flow field $v(x, t)$, and
the microscopic orientational configuration $d(x, t)$, which can be
derived from the coarse graining of the directions of liquid crystal
molecules. Equation \eqref{1} is the conservation of linear momentum
(the force balance equation). It combines a usual equation
describing the flow of an isotropic fluid with an extra nonlinear
coupling term that is anisotropic. This extra term is the induced
elastic stress from the elastic energy through the transport, which
is represented by the equation for $d$. Equation \eqref{2} simply
represents incompressibility of the fluid. Equation \eqref{3} is
associated with conservation of the angular momentum. The left-hand
side of \eqref{3} stands for the kinematic transport by the flow
field, while the right-hand side denotes the internal relaxation due
to the elastic energy (cf. e.g., \cite{SL08}).

The above system was derived from the macroscopic point of view and
was very successful in understanding the coupling between the
velocity field $v$ and the director field $d$, especially in the
liquid crystals of nematic type. In many experiments and earlier
theories on nematic liquid crystals, the samples are treated as
consisting of slow moving particles. Hence, one approach is to study
the behavior of the director field in the absence of the velocity
field. Unfortunately, the flow velocity does disturb the alignment
of the molecules. Moreover, the converse is also true: a change in
the alignment of molecules will induce velocity and this velocity
will in turn affect the time evolution of the director field. In
this process, we cannot assume that the velocity field will remain
small even if we start from zero velocity.

We recall that in the context of hydrodynamics, the basic variable
is the flow map (particle trajectory) $x(X, t)$. $X$ is the original
labeling of the particle (the Lagrangian coordinate), which is also
referred to as the material coordinate. $x$ is the current
(Eulerian) coordinate, and is also called the reference coordinate.
For a given velocity field $v(x, t)$, the flow map is defined by the
following ODEs:
\[\dot x=v(x(X, t), t), \;\;\  \  x(X, 0)=X. \]
To incorporate elastic properties of the material, we introduce the
deformation tensor \[\mathcal{F}(X, t)=\frac{\partial x}{\partial
X}(X,t). \] This quantity is defined in the Lagrangian material
coordinate and it satisfies $$ \frac{\partial
\mathcal{F}(X,t)}{\partial t}=\frac{\partial v(x(X,t),t)}{\partial
X}.
 $$
 In Eulerian coordinates, we define
$\tilde{\mathcal{F}}(x, t)=\mathcal{F}(X, t)$. By using the chain
rule, the above equation can be transformed into the following
transport equation for $\tilde{\mathcal{F}}$ (cf. e.g., \cite{GU}):
\begin{eqnarray*}
\tilde{\mathcal{F}}_t+(v\cdot\nabla)\tilde{\mathcal{F}}&=&\nabla
v\tilde{\mathcal{F}}.
\end{eqnarray*}
Without ambiguity, we will not distinguish the notations
$\mathcal{F}$ and $\tilde{\mathcal{F}}$ in the following text.

If the liquid crystal molecule has a rod-like shape, then transport
of the direction field $d$ can be expressed as (cf. \cite{Je, Lar})
\[ d(x(X, t), t)=\mathcal{F}d_0(X), \]
where $d_0(X)$ is the initial condition. This equation demonstrates
the stretching of the director besides the transport along the
trajectory. By taking full time derivative on both sides, we have
(cf. e.g., \cite{SL08})
\[\frac{D}{Dt}d(x(X, t), t)=\dot{\mathcal{F}}d_0(X)=\nabla v\mathcal{F}d_0
=\nabla v d=(d\cdot\nabla)v.
 \]
Hence, the total transport of the orientation vector $d$ becomes
\[d_t+v\cdot\nabla d-d\cdot\nabla v, \]
which represents the covariant parallel transport with no-slip
boundary condition between the rod-like particle and the fluid (cf.
\cite{Je, Lar}). In general, for molecules of ellipsoidal shape with
a finite aspect ratio, the transport of the main axis direction is
represented by
\[d(x(X, t), t)=\mathbb{E}d_0(X), \]
where $\mathbb{E}$ is a linear combination of $\mathcal{F}$ and
$\mathcal{F}^{-T}$ that satisfies the transport equation:
\[\mathbb{E}_t+(v\cdot\nabla)\mathbb{E}=
(\alpha\nabla v+(1-\alpha)(-\nabla^{T}v))\mathbb{E}.  \] As a
consequence, the total transport of $d$ in the general case
$\alpha\in[0,1]$ becomes
\[ d_t+v\cdot\nabla d-\omega d-(2\alpha-1)Ad, \] where $A=\frac{\nabla
v+\nabla^Tv}{2}$, $\omega=\frac{\nabla v-\nabla^Tv}{2}$.
\begin{remark} \label{shape}
We note that the spherical, rod-like and disc-like
liquid crystal molecules correspond to $\alpha=\frac{1}{2}$, $1$ and
$0$, respectively (cf. e.g., \cite{dG,E61,Je,SL08,LLZ07}).
\end{remark}

It is worth mentioning that system \eqref{1}--\eqref{3} can be
derived from an energetic variational point of view (cf.
\cite{HKL10}). The least action principle (LAP) with action
functional gives the Hamiltonian parts (conservative forces) of the
hydrodynamic system, and the maximum/minimum dissipation principle
(MDP), i.e., Onsager's principle, yields the dissipative parts
(dissipative forces). We refer to \cite{SL08} for a detailed
discussion particularly on the rod-like molecule system, i.e.,
$\alpha=1$ (see also \cite{HKL10} for illustrations on an immiscible
two-phase flow model as well as an incompressible viscoelastic
complex fluid model).

In the current paper, we consider system \eqref{1}--\eqref{3}
subject to the periodic boundary conditions (namely, $v,d$ are well
defined in the $n$-dimensional torus
$\mathbb{T}^n=\mathbb{R}^n/\mathbb{Z}$)
 \be
v(x+e_i)=v(x),\quad d(x+e_i)= d(x),\qquad \text{for}\ x\in
\mathbb{R}^n,
 \label{4}
 \ee
 where unit vectors $e_i \ (i=1,...,n)$ are the canonical basis of
 $\mathbb{R}^n$. Besides, we suppose the initial conditions
 \be
 v|_{t=0}=v_0(x) \ \ \text{with}\ \nabla\cdot v_0=0,\quad
 d|_{t=0}=d_0(x),\qquad \text{for}\ x\in Q.\label{5}
 \ee

As far as the related mathematical results are concerned,  we notice
that the current system \eqref{1}--\eqref{3} is a properly
simplified version of the general Ericksen--Leslie system for the
nematic liquid crystal flows (cf. \cite{CS, dG, LL01, Le79}).
However, the general Ericksen--Leslie system was so complicated that
only some special cases of it have been investigated theoretically
or numerically in the literature. A highly simplified system subject
to Dirichlet boundary conditions was first studied in \cite{LL95}.
Due to the dissipation of total energy, existence of global weak
solutions was obtained therein. Moreover, the authors proved
existence and uniqueness of global classical solutions in $2D$ as
well as some corresponding results in $3D$ (provided that the
constant viscosity $\nu$ was assumed to be large enough). Later on,
the authors proved in \cite{LL96} a partial regularity result that
the one dimensional space-time Hausdorff measure of the singular set
of suitable weak solutions to the system in \cite{LL95} was zero.
Numerical code using finite element methods was introduced in
\cite{LWa01, LSY07} to study the interaction of the defects and the
flow fields. Related to system \eqref{1}--\eqref{3}, a $C^0$ finite
element scheme preserving the energy law was established in
\cite{LLZ07} for simulating the kinematic effects in liquid crystal
dynamics. In \cite{SL08}, the authors studied global existence of
weak/classical solutions to system \eqref{1}--\eqref{5} for the
special case $\alpha=1$ (i.e., rod-like molecule) in $2D$ and $3D$
with large viscosity. For the general Ericksen--Leslie system,
results on the existence of solutions were obtained in \cite{LL01}
for a special case, i.e., when a maximum principle held for the
equation of $d$.

We note that in the highly simplified system studied in \cite{LL95},
the liquid crystal molecules are assumed to be "small" that no
kinematic transport was considered there. This somewhat lacks
significant physical meaning and is rather different from our
current problem \eqref{1}--\eqref{5}. However, it is quite
interesting that system \eqref{1}--\eqref{5} still enjoys a
(dissipative) \emph{basic energy law} similar to \cite{LL95}, which
governs the dynamics of the liquid crystal flow. To see this point,
we let $(v, d)$ be a classical solution to system
\eqref{1}--\eqref{5}. Multiplying equation \eqref{1} with $v$,
equation \eqref{3} with $-\lambda(\Delta d-f(d))$, adding them
together and integrating over $Q$, we get (cf. also \cite{LLZ07})
\[\frac{1}{2}\frac{d}{dt}\int_{Q}\left(|v|^2+\lambda|\nabla d|^2+2\lambda F(d)\right) dx
=-\int_{Q}\left(\nu|\nabla v|^2+\lambda\gamma|\Delta d-f(d)|^2
\right)dx. \]

Comparing with the small molecule system \cite{LL95}, we now have
different kinematic transports, which lead to two more stress terms
$\alpha(\Delta d-f(d))\otimes d-(1-\alpha)d\otimes(\Delta d-f(d))$
in the elastic stress in \eqref{1} and two more transport
terms $-\alpha (\nabla v)d+(1-\alpha)(\nabla ^T v) d$ in \eqref{3}.
These bring extra mathematical difficulties to prove the
well-posedness results of our system. For instance, among those
kinematic transport terms, there is an extra stretching effect on
the director field $d$, which leads to the loss of maximum principle
for the equation of $d$ (cf. \cite{LL95, LL01}). On the other hand,
the extra stress terms cannot be suitably defined in the weak
formulation as in \cite{LL95}, and thus the requirement that $d\in
L^\infty(0,T; L^\infty(Q))$ must be imposed to ensure the
well-posedness of the problem. (We refer to \cite{SL08} for a
discussion on the special case $\alpha=1$.) Here, we
have to confine ourselves to the periodic boundary conditions.
The corresponding initial boundary value problem  for
\eqref{1}--\eqref{3} (with e.g., Dirichlet boundary conditions) are still open,
because one cannot get rid of certain boundary
terms when performing integration by parts, which will bring extra
difficulties in the derivation of higher-order energy inequalities.

Since the parameters $\lambda$ and $\gamma$ do not
play important roles in the proof, we just set $\lambda=\gamma=1$ in
the remaining part of the paper. We now state the main results of this paper (see Section 2 for
functional settings).
 \bt
 \label{main2d}
 When $n=2$, for any initial data $(v_0, d_0)\in V\times H^2_p(Q)$ and $\alpha\in [0,1]$, problem
 \eqref{1}--\eqref{5} admits a unique
 global classical solution. It converges
 to a steady state $(0, d_\infty)$ as time goes to
 infinity such that
 \be \lim_{t\rightarrow +\infty}
 (\|v(t)\|_{H^1}+\|d(t)-d_\infty\|_{H^2})=0,\label{cgce}
 \ee
 where $d_\infty$ satisfies the following nonlinear
elliptic periodic boundary value problem:
  \be  - \Delta d_\infty + f(d_\infty)=0,\ \quad
  d_\infty(x+e_i)=d_\infty(x), \ \ x\in \ \!\mathbb{R}^n.
   \label{staa}
   \ee
 Moreover,
 \be
 \|v(t)\|_{H^1}+\|d(t)-d_\infty\|_{H^2}\leq C(1+t)^{-\frac{\theta}{(1-2\theta)}}, \quad \forall\ t \geq
 0.\label{rate}
 \ee
 In above,  $C$ is a positive constant depending on $\nu$, $\|v_0\|_{H^1}, \|d_0\|_{H^2}, \|d_\infty\|_{H^2}$. The constant $\theta \in (0,1/2)$ is the so-called \L ojasiewicz exponent depending on $d_\infty$ (cf. Lemma \ref{ls}).
 \et
 \bt\label{main3da}
 When $n=3$, for any initial data $(v_0, d_0)\in V\times H^2_p(Q)$ and $\alpha\in [0,1]$, if
 the constant viscosity is sufficiently large such that $\nu\geq
 \nu_0(v_0,d_0)$ (cf. \eqref{nu}), then problem
 \eqref{1}--\eqref{5} admits a unique global classical solution
 enjoying the same asymptotic behavior as in Theorem \ref{main2d}.
 \et
 \bt\label{main3d}
 When $n=3$, let $d^*\in H_p^2(Q)$ be an absolute minimizer of the functional
 \be E(d)=\frac12\|\nabla d\|^2+ \int_Q F(d)dx.
  \ee
There exists a constant $\sigma\in (0,1]$, which may depend on
$\nu$, $f$ and $d^*$, such that for $\alpha\in [0,1]$ and any initial data $(v_0, d_0)\in V\times H^2_p(Q)$
 satisfying
 $\|v_0\|_{H^1}+\|d_0-d^*\|_{H^2}<\sigma$, problem
 \eqref{1}--\eqref{5} admits a unique global classical solution
 enjoying the same asymptotic behavior as in Theorem \ref{main2d}.
 \et
  The problem whether a global bounded solution of nonlinear evolution equations will
converge to a single equilibrium as time tends to infinity, has
attracted a lot of interests among mathematicians for a long time.
It is well known that the structure of the set of equilibria can be
nontrivial and may form a continuum for certain physically
reasonable nonlinearities in higher dimensional case. In particular,
under current periodic boundary conditions, one may expect that
the dimension of the
 set of equilibria is at least $n$. This is  because a shift in each
 variable should give another steady state (cf. also \cite{RH99}),
 e.g., in our case, if $d^*$ is a steady state, so is $d^*(\cdot\ \!+\tau e_i)$, $1\leq i\leq
 n, \ \!\tau\in \mathbb{R}^+$. Moreover, we note that for our
 system, every constant vector $d$ with unit-length ($|d|=1$) serves as an absolute
 minimizer of the functional $E(d)$. As a result, it is
highly nontrivial to decide whether a given trajectory will converge
to a single equilibrium, or in other words, whether the
$\omega$-limit set is a single point. Fortunately, we are able to
apply the so-called \L ojasiewicz--Simon approach (cf. L. Simon
\cite{S83}) to obtain our goal. Simon's idea relies on a nontrivial
generalization of the
  \L ojasiewicz inequality (cf. \cite{L1,L2}) for analytic functions defined in the finite
  dimensional space $\mathbb{R}^m$ to infinite dimensional spaces.
  We refer to
\cite{J981,HJ01,ET01,WGZ1,WGZ4,LD,RH99,W07,Hu,Z04} and the
references therein for applications to various evolution equations.
  In order to apply the \L ojasiewicz--Simon approach to our
problem \eqref{1}--\eqref{5}, we need to introduce a suitable
{\L}ojasiewicz--Simon type inequality for vector functions with
periodic boundary condition (cf. Lemma \ref{ls} below). Although
different kinematic transports for the liquid crystal molecules will
yield different dynamics of the hydrodynamical system, we shall show
that global solutions to our system have some similar long-time
behavior under different kinematic transports with $\alpha\in [0,1]$
(i.e., convergence to equilibrium with a uniform convergence rate in
the parameter $\alpha$). Our results can be considered as a
nontrivial generalization of the result in \cite{LW08}, where the
corresponding asymptotic results for the small molecule system (cf.
\cite{LL95}) were discussed under various boundary conditions (e.g.,
Dirichlet b.c./free-slip b.c.).

 The remaining part of this paper is organized as follows. In Section 2,
 we introduce the functional setting and some preliminary results.
 Section 3 is devoted to the $2D$ case. We prove the convergence of
 global solutions to single steady states as time
tends to infinity for arbitrarily regular initial data and obtain an
estimate on convergence rate that is uniform in $\alpha$. In Section
4, we study the $3D$ case. The same convergence result is proved for
two subcases, in which global existence of classical solutions can
be obtained. In Section 5, we discuss the results for liquid crystal
flows with non-vanishing average velocity.

\section{Preliminaries}
\setcounter{equation}{0}
 We recall the well-established functional
setting for periodic boundary value problems (cf. e.g.,
\cite[Chapter 2]{Te}):
 \bea
H^m_p(Q)&=&\{v\in H^m(\mathbb{R}^n,\mathbb{R}^n)\ |\ v(x+e_i)=v(x)\},\non\\
 \dot{H}^m_p(Q) &=& H^m_p(Q)\cap \left\{v:\ \int_Qv(x)dx=0\
\right\},\non\\
H&=&\{v\in L^2_p(Q,\mathbb{R}^n) ,\ \nabla\cdot v=0\},\ \
\text{where}\ L^2_p(Q,\mathbb{R}^n)=H^0_p(Q),
\non\\
V&=&\{v\in \dot{H}^1_p(Q),\ \nabla\cdot v=0\},\non\\
V'&=&\text{the\ dual space of\ } V.\non
 \eea
 For the sake of simplicity, we denote the inner product on
$L^2_p(Q,\mathbb{R}^n)$ as well as  $H$ by $(\cdot,\cdot)$ and the
associated norm by $\|\cdot\|$. We shall denote by $C$ the genetic
constants depending on $\lambda, \gamma, \nu, Q, f$ and the initial
data. Special dependence will be pointed out explicitly in the text
if necessary.

Following \cite{Te}, one can define mapping $S$
 \be S u=-\Delta u, \quad  \forall\ u\in D(S):=\{u\in H, \Delta u\in H\}=\dot H^2_p\cap
 H.\label{stokes}
 \ee
 The operator $S$ can be viewed as an unbounded
positive linear self-adjoint operator on $H$. If $D(S)$ is endowed
with the norm induced by $\dot H^0_p(Q)$, then $S$ becomes an
isomorphism from $D(S)$ onto $H$. More detailed properties of
operator $S$ can be found in \cite{Te}.

We also recall
 the interior elliptic estimate, which states that for any $U_1\subset\subset U_2$
 there is a constant $C>0$ depending only on $U_1$ and $U_2$ such that
 $\|d\|_{H^2(U_1)}\leq C(\|\Delta d\|_{L^2(U_2)}+\|d\|_{L^2(U_2)})$. In our
 case, we can choose $Q'$ to be the union of $Q$ and its
 neighborhood copies. Then we have
 \be
 \|d\|_{H^2(Q)}\leq C(\|\Delta d\|_{L^2(Q')}+\|d\|_{L^2(Q')})= 9C(\|\Delta
 d\|_{L^2(Q)}+\|d\|_{L^2(Q)}).\label{dh2}
 \ee
The following
 embedding inequalities will be frequently used in the subsequent proofs:
 \bl (cf. \cite{Te}) If $n=2$, we have
  $$\|u\|_{L^\infty(Q)}\leq c\|u\|^\frac12\|u\|^\frac12_{H^2},\quad
  \forall\ u \in H^2_p(Q),$$
If $n=3$, then
$$\|u\|_{L^\infty(Q)}\leq c\|u\|_{H^1}^\frac12\|u\|_{H^2}^\frac12,\quad
  \forall\ u \in H^2_p(Q).$$
  Here, we note that $\|u\|_{H^2(Q)}$ can be estimated by $\|\Delta u\|$ and $\|u\|$
  in spirit of estimate \eqref{dh2}.
  \el
The global existence of weak/classical solutions to the system
\eqref{1}--\eqref{5} for $\alpha=1$ (i.e., the rod-like molecule)
has been proven in \cite[Theorem 1.1]{SL08}. The
proof relies on a modified Galerkin
 method introduced in \cite{LL95}. After
generating a sequence of approximate solutions, one applies the
Ladyzhenskaya method to get higher-order energy estimates (cf. Lemma
\ref{he2d}, Lemma \ref{he3da}), which enable us to pass to the
limit. Besides, a weak solution together with higher-order estimates
implies a strong solution. Furthermore, a bootstrap argument based
on Serrin's result \cite{Se} (cf. also \cite{K67}) and Sobolev
embedding theorems leads to the existence of classical solutions. It
is not difficult to extend the result in \cite{SL08} to our general
case $\alpha\in [0,1]$ and we shall omit the detailed proof here.
 \bt\label{glo}
For $\alpha\in [0,1]$ and any $(v_0, d_0)\in V\times H^2_p(Q)$, if either
 $n=2$ or $n=3$ with the large viscosity assumption $\nu\geq
 C(v_0,d_0)$ (cf. \eqref{nu}), problem \eqref{1}--\eqref{5} admits a global
 solution such that
 \be v\in L^\infty(0, \infty; V)\cap L^2_{loc}(0,\infty; D(S)),\quad d\in
 L^\infty(0,\infty; H^2)\cap L^2_{loc}(0,\infty; H^3),\label{so}
 \ee
 Moreover, $v,d\in C^\infty(Q)$ for all
 $t>0$.
 \et

As mentioned in the introduction, system \eqref{1}--\eqref{5} admits a
Lyapunov functional (recall that we have set $\lambda=\gamma=1$ for simplicity)
 \be \mathcal{E}(t)=\frac{1}{2}\|v(t)\|^2+\frac{1}{2}\|\nabla
 d(t)\|^2+\int_Q F(d(t))dx,\label{Ly}
 \ee
 which satisfies the following \textit{basic energy law}
 \be
 \frac{d}{dt}\mathcal{E}(t)=-\nu\|\nabla v(t)\|^2-\|\Delta
 d(t)-f(d(t))\|^2,
 \quad \forall \ t\geq 0.\label{ENL}
 \ee

Before ending this section, we shall show a continuous dependence
result on the initial data, from which one can infer the uniqueness
of regular solutions to problem \eqref{1}--\eqref{5}. We note that
no uniqueness result has been obtained before, even for the special
case $\alpha=1$ in \cite{SL08}.
 \bl \label{cd}
 Suppose that $(v_i,d_i)$ are global solutions to the problem
 \eqref{1}--\eqref{5} corresponding to initial data $(v_{0i}, d_{0i})\in V\times
 H^2_p(Q)$, $i=1,2$, respectively. Moreover, we assume that for any $T>0$, the following estimate
 holds
 \be
 \|v_i(t)\|_{H^1}+\|d_i(t)\|_{H^2}\leq M ,\quad \forall \
 t\in [0,T].\label{BD}
 \ee
 Then for any $ t\in [0,T]$, we have
\begin{eqnarray}
&& \| (v_1-v_2)(t)\|^2+\|(d_1-d_2)(t)\|_{H^1}^2 +
\int_0^t(\nu\|\nabla
 (v_1-v_2)(\tau)\|^2+\|\Delta( d_1-d_2)(\tau)\|^2) d\tau \non\\
 &\leq&  2e^{Ct}(\|v_{01}-v_{02}\|^2+\|d_{01}-d_{02}\|_{H^1}^2),
\end{eqnarray}
where $C$ is a constant depending on $M$ but not on $t$.
 \el
\begin{proof}
 Denote
\begin{eqnarray}
\bar v=v_1-v_2, \ \  \bar d=d_1-d_2.
\end{eqnarray}
Since $(v_i,d_i)$ are solutions to problem \eqref{1}--\eqref{5}, we
have
 \bea
 && v_{1t}+v_1\cdot\nabla v_1-\nu \Delta v_1+\nabla P_1\non\\
 &&\ =-
 \nabla\cdot[\nabla d_1\odot\nabla d_1+\alpha(\Delta d_1-f(d_1))\otimes d_1-(1-\alpha)d_1\otimes (\Delta d_1-f(d_1))],\label{1 for the first}\\
 && \nabla \cdot v_1 = 0,\label{2 for the first}\\
 && d_{1t}+v_1\cdot\nabla d_1-\alpha  (\nabla v_1) d_1+(1-\alpha)(\nabla^T v_1) d_1=\Delta d_1-f(d_1),\label{3 for the first}
 \\
 && v_{2t}+v_2\cdot\nabla v_2-\nu \Delta v_2+\nabla P_2\non\\
 &&\ =-\nabla\cdot[\nabla d_2\odot\nabla d_2+\alpha(\Delta d_2-f(d_2))\otimes d_2-(1-\alpha)d_2\otimes (\Delta d_2-f(d_2))],\label{1 for the second}\\
 && \nabla \cdot v_2 = 0,\label{2 for the second}\\
 && d_{2t}+v_2\cdot\nabla d_2-\alpha  (\nabla v_2) d_2+(1-\alpha)(\nabla^T v_2) d_2=\Delta d_2-f(d_2).\label{3 for the second}
 \eea
  Multiplying
$\bar v$ with the subtraction of \eqref{1 for the second} from
\eqref{1 for the first} and $ \bar d-\Delta \bar d$ with the
subtraction of \eqref{3 for the second} from \eqref{3 for the
first}, respectively, adding the two resultants together, using
integration by parts, we infer from the periodic boundary conditions
that
\begin{eqnarray}
&&\frac{1}{2}\frac{d}{dt}\left(\|\bar v\|^2+\|\bar d\|^2+\|\nabla
\bar d\|^2
\right)+\nu\|\nabla \bar v\|^2+\|\nabla \bar d\|^2+\|\Delta \bar d\|^2\nonumber\\
&=&-(v_2\cdot\nabla \bar v, \bar v)-(\bar v\cdot \nabla v_1, \bar
v)-(\Delta d_2\cdot \nabla \bar
d, \bar v)\non\\
&& +\alpha(\Delta d_2 \otimes \bar d, \nabla \bar
v)-\alpha((f(d_1)-f(d_2))\otimes d_1, \nabla \bar
v)-\alpha(f(d_2)\otimes \bar
d,\nabla \bar v)\non\\
&& -(1-\alpha)(\bar d \otimes \Delta d_2, \nabla \bar
v)+(1-\alpha)(d_1 \otimes (f(d_1)-f(d_2)), \nabla \bar
v)+(1-\alpha)(\bar
d \otimes f(d_2),\nabla \bar v)\non\\
&& +(f(d_1)-f(d_2), \Delta \bar d) +(v_2\cdot\nabla \bar d, \Delta
\bar d)-\alpha((\nabla v_2)\bar d, \Delta\bar d)+(1-\alpha)
((\nabla^T v_2)\bar d, \Delta\bar
d) \non\\
&& -(f(d_1)-f(d_2), \bar d)-(\bar v\cdot \nabla d_1,\bar d)-(v_2\cdot \nabla \bar d, \bar d)\non\\
&& +\alpha((\nabla \bar v) d_1,\bar d)+\alpha((\nabla v_2)\bar d,
\bar d)-(1-\alpha)((\nabla^T \bar v) d_1,\bar
d)-(1-\alpha)((\nabla^T v_2)\bar d, \bar d). \label{eni}
\end{eqnarray}
Assumption \eqref{BD} implies that $\|v\|_{H^1}$ and $\|d\|_{H^2}$
are uniformly bounded in $[0,T]$. Hence, by using the Sobolev
embedding theorems, we can estimate the right-hand side of
\eqref{eni} term by term (the calculation presented here is for the
$3D$ case and it is also valid for $2D$).
\begin{eqnarray}
&& |(v_2\cdot\nabla \bar v, \bar v)| +|(\bar v\cdot \nabla v_1, \bar
v)| \non\\
&\leq& \|v_2\|_{L^6}\|\nabla \bar v\|\|\bar v\|_{L^3}+\|\bar
v\|_{L^4}^2\|\nabla v_1\|\non\\
&\leq&  C\|\nabla \bar v\|(\|\nabla \bar v\|^\frac12\|\bar
v\|^\frac12+\|\bar v\|)+C(\|\nabla \bar v\|^\frac34\|\bar
v\|^\frac14+\|\bar
v\|)^2\non\\
&\leq& \varepsilon \|\nabla \bar v\|^2+ C\|\bar v\|^2.
\end{eqnarray}
\begin{eqnarray}
&&|(\Delta d_2\cdot \nabla \bar d, \bar v)|+|\alpha(\Delta d_2
\otimes \bar d, \nabla \bar v)|+|(1-\alpha)(\bar d \otimes \Delta
d_2, \nabla \bar v)|
\nonumber\\
&\leq& \|\Delta d_2\|\|\nabla \bar d\|_{L^3}\|\bar
v\|_{L^6}+\|\Delta d_2\|\|\bar d\|_{L^\infty}\|\nabla \bar v\|\non\\
&\leq& C(\|\Delta \bar d\|^\frac12\|\nabla \bar d\|^\frac12+\|\nabla
\bar d\|)(\|\nabla \bar v\|+\|\bar v\|)+ C(\|\Delta \bar
d\|^\frac34\|\bar d\|^\frac14+\| \bar d\|)\|\nabla \bar
v\|\non\\
&\leq & \varepsilon(\|\Delta \bar d\|^2+\|\nabla \bar v\|^2)+
C(\|\bar d\|_{H^1}^2+\|\bar v\|^2).
\end{eqnarray}
\begin{eqnarray}
&&|\alpha ((f(d_1)-f(d_2))\otimes d_1, \nabla \bar
v)|+|\alpha(f(d_2)\otimes \bar
d,\nabla \bar v)|\non\\
&& +|(1-\alpha)(d_1 \otimes (f(d_1)-f(d_2)), \nabla \bar
v)|+|(1-\alpha)(\bar
d \otimes f(d_2),\nabla \bar v)|\non\\
&& +|(f(d_1)-f(d_2), \Delta \bar d)| +|(f(d_1)-f(d_2), \bar d)| \nonumber\\
&\leq& (\|f(d_1)-f(d_2)\|\|d_1\|_{L^\infty}+
\|f(d_2)\|_{L^\infty}\|\bar d\|)\|\nabla \bar v\|
+\|f(d_1)-f(d_2)\|(\|\Delta \bar d\|+\|\bar d\|) \non\\
&\leq& C(\|f'(\xi)\|_{L^\infty}+1)\|\bar d\|\|\nabla \bar
v\|+C\|f'(\xi)\|_{L^\infty}\|\bar d\|(\|\Delta \bar d\|+\|\bar d\|)\non\\
&\leq& \varepsilon( \|\nabla \bar v\|^2+\|\Delta \bar d\|^2)+
C\|\bar d\|^2,
\end{eqnarray}
where $\xi=ad_1+(1-a)d_2$ with $a\in (0,1)$.
\begin{eqnarray}
&& |(v_2\cdot\nabla \bar d, \Delta \bar d)|+|\alpha((\nabla v_2)\bar
d, \Delta\bar d)| + |(1-\alpha) ((\nabla^T v_2)\bar d,
\Delta\bar d)|\non\\
&\leq& \|v_2\|_{L^6}\|\nabla \bar d\|_{L^3}\|\Delta \bar
d\|+\|\nabla v_2\|\|\bar d\|_{L^\infty}\|\Delta \bar d\|\non\\
&\leq& C(\|\Delta\bar d\|^\frac12\|\nabla \bar d\|^\frac12+\|\nabla
\bar d\|)\|\Delta \bar d\|+ C(\|\Delta \bar d\|^\frac34\|\bar
d\|^\frac14+\|\bar d\|)\|\Delta \bar d\|\non\\
&\leq& \varepsilon\|\Delta\bar d\|^2+C\|\bar d\|_{H^1}^2.
\end{eqnarray}
\begin{eqnarray}
&& |(\bar v\cdot \nabla d_1,\bar d)|+|(v_2\cdot \nabla \bar d, \bar
d)|+|\alpha((\nabla \bar v)d_1,\bar d)|+|\alpha((\nabla v_2)\bar d,
\bar
d)| \non\\
&& +|(1-\alpha)((\nabla^T \bar v) d_1,\bar d)|+|(1-\alpha)((\nabla^T
v_2)\bar d, \bar d)|\non\\
 &\leq & \|\nabla d_1\|_{L^3}\|\bar
v\|\|\bar d\|_{L^6}+ \|v_2\|_{L^6}\|\nabla \bar d\|\|\bar
d\|_{L^3}+\|d_1\|_{L^\infty}\|\nabla \bar v\|\|\bar d\|+ \|\nabla
v_2\|\|\bar d\|^2_{L^4}\non\\
&\leq& \varepsilon\|\nabla \bar v\|^2+C(\|v\|^2+\|\bar d\|_{H^1}^2).
\end{eqnarray}
Choosing $\varepsilon$ small enough in the above estimates, we infer
from \eqref{eni} that
\begin{eqnarray}
&&\frac{d}{dt}(\|\bar v\|^2+\|\bar d\|_{H^1}^2) + \nu \|\nabla \bar
v\|^2 + \|\Delta \bar d\|^2\leq C(\|\bar v\|^2+\|\bar d\|_{H^1}^2),
\end{eqnarray}
where the constant $C$ depends on $\|v_i\|_{H^1}$, $\|d_i\|_{H^2}$,
but not on $t$. We also note that $C$ is uniform for all $\alpha\in
[0,1]$.

 By Gronwall's
inequality, we can see that for any $t\in [0,T]$,
\begin{eqnarray}
\|\bar v(t)\|^2+\|\bar d(t)\|_{H^1}^2 + \int_0^t(\nu\|\nabla \bar
v(\tau)\|^2+\|\Delta \bar d(\tau)\|^2) d\tau \leq 2e^{Ct}(\|\bar
v(0)\|^2+\|\bar d(0)\|_{H^1}^2).
\end{eqnarray}
The proof is complete.
\end{proof}

 \bc  \label{uniq} The global
 solution $(v,d)$ obtained in Theorem \ref{glo} is unique.
 \ec
 \begin{proof}
 Since the global classical solution $(v,d)$ to the problem
 \eqref{1}--\eqref{5} obtained in Theorem \ref{glo} is uniformly bounded
 in $V\times H^2$ (cf. Lemma \ref{vcon} and Proposition \ref{AA1}),
 it follows immediately from Lemma \ref{cd} that the solution is
 unique.
 \end{proof}

\section{Results in Two Dimensional Case}
\setcounter{equation}{0}

\subsection{Higher-order estimates}
It follows from the basic energy law \eqref{ENL} that
 \be \mathcal{E}(t)+ \int_0^t (\nu\|\nabla v(\tau)\|^2+\|\Delta
 d(\tau)-f(d(\tau))\|^2) d\tau \leq
\mathcal{E}(0)<+\infty,\quad \forall\ t\geq 0.\label{ae}
 \ee
  From the definition of $\mathcal{E}(t)$ and \eqref{ae}, we conclude that
 \bl \label{v0h1}
  \be \|v(t)\|+\|d(t)\|_{H^1}\leq C,\quad  \forall\ t\geq 0,\label{bd1}
  \ee
  and
  \be \int_0^{+\infty} (\nu\|\nabla v(t)\|^2+\lambda\gamma\|\Delta
 d(t)-f(d(t))\|^2) dt\leq C,\label{intA}
  \ee
  where $C$ is a constant depends only on $\|v_0\|$, $\|d_0\|_{H^1}$.
  \el

Denote
 \be
 A(t)=\|\nabla v(t)\|^2+ \lambda\|\Delta d(t)-f(d(t))\|^2.\label{A}
 \ee
In $2D$ case, an important property for global solutions to
problem \eqref{1}--\eqref{5} is the following higher-order energy
inequality.
 \bl \label{he2d} In $2D$ case, the following inequality holds for the
 classical solution $(v,d)$ to problem \eqref{1}--\eqref{5}:
 \be \frac{d}{dt}A(t)+\nu\|\Delta v(t)\|^2+\|\nabla(\Delta d(t)-f(d(t)))\|^2\leq C(A^2(t)+A(t)), \quad \forall
 \
 t\geq 0,\label{he}
 \ee
 where $C$ is a constant depending on $\|v_0\|, \|d_0 \|_{H^1}, \nu, f, Q$, but independent of $\alpha$.
 \el
 \begin{proof}
A direct calculation yields that
 \bea
 && \frac12\frac{d}{dt}A(t)+\nu \|\Delta
v\|^2+\|\nabla(\Delta d-f(d))\|^2
\non\\
 &=&(\Delta v, v\cdot\nabla v)+(\Delta
v, \nabla\cdot(\nabla d \odot \nabla d))+\alpha(\Delta v, \nabla\cdot((\Delta d-f) \otimes d))
 \non \\&&-(1-\alpha)(\Delta v, \nabla\cdot(d \otimes (\Delta d-f) ))
 +(\nabla(\Delta d-f), \nabla(v\cdot\nabla d)) \non\\
 &&-\alpha(\nabla(d\cdot\nabla v), \nabla(\Delta d-f))+(1-\alpha)(\nabla(d\cdot\nabla^T v), \nabla(\Delta d-f))
 \non\\
 && -(\Delta d-f, f'(d)d_t)\non\\
 &:=& I_1+...+I_8.\label{diffA}
\eea
 Based on Lemma \ref{v0h1}, we can
estimate the right-hand side of the above equality term by term.
 \be
|I_1| \leq \|\Delta v\|\|v\|_{L^\infty}\|\nabla
v\| \leq C\|\Delta v\|^{\frac32}\|\nabla v\| \leq \frac{\nu}{32}\|\Delta
v\|^2+C\|\nabla v\|^4.
 \ee
 Recalling the definition of operator $S$ (cf. \eqref{stokes}), we see
 that for $v\in D(S)$, $\Delta v$ is also divergence free. Then
 \be I_2=\left(\Delta v,\nabla \left(\frac{|\nabla d|^2}{2}\right)\right)+ (\Delta v, \Delta d\cdot\nabla d)
= (\Delta v, \nabla d\cdot\Delta d).
\label{cancellation 5}
 \ee
Using integration by parts, we get
 \bea I_3 +I_6 &=& \alpha\left(\Delta v,
\nabla\cdot((\Delta d-f) \otimes
d)\right)-\alpha(\nabla(d\cdot\nabla v), \nabla(\Delta d-f))
\non\\
&=& -\alpha(d\cdot\nabla\Delta
v,\Delta
d-f)+\alpha(\Delta(d\cdot\nabla v), \Delta d-f)
\non\\
&=& \alpha(\Delta
d\cdot\nabla v, \Delta d-f)+2\alpha((\nabla d\cdot\nabla)\cdot\nabla
v, \Delta d-f)\non\\
&:=& I_3'+I_6' \label{cancellation 1}
 \eea
 \bea
I_4 +I_7&=& -(1-\alpha)(\Delta v, \nabla\cdot(d \otimes (\Delta d-f)
))-(1-\alpha)(\Delta(d\cdot\nabla^T v), \Delta d-f)\non\\
&=&-(1-\alpha)(\Delta d\cdot\nabla^T v, \Delta d-f) -2(1-\alpha)((\nabla d\cdot\nabla)\cdot\nabla^T v, \Delta d-f)
\non\\
&:=& I_4'+I_7' .
\label{cancellation 2}
 \eea
 Besides,
  \bea
I_5&=&(\nabla(\Delta d-f), \nabla(v\cdot\nabla d)) \non\\
&=&-(\Delta d-f, \Delta v\cdot\nabla d)-2(\Delta
d-f, \nabla v\cdot\nabla^2 d)-(\Delta d-f,
v\cdot\nabla\Delta d)\non\\
&:=& I_{5a}+I_{5b}+I_{5c}.\non
 \eea
It follows that
 \bea
I_{5a} &=& -(\Delta
v, \nabla d\cdot\Delta d)+  (\Delta v, \nabla F(d))= -(\Delta v, \nabla d\cdot\Delta d), \label{cancellation 6}        \\
I_{5c}&=& -(\Delta
d-f, v\cdot\nabla(\Delta
d-f))-(\Delta d-f, v\cdot\nabla f) =-(\Delta d-f, v\cdot\nabla f).
\label{cancellation 7}
 \eea
Then we see that
 $$I_2+I_{5a}=0.$$
 and
 \bea
 I_{5c}+I_8&=& -(\Delta d-f, v\cdot\nabla f) -(\Delta d-f, f'(d)d_t) \non\\
 &=& -(\Delta d-f, f'(d)(\Delta d-f)) +(\Delta d-f, f'(d)(-\alpha d\cdot\nabla
v+(1-\alpha)d\cdot\nabla^Tv))\non\\
&:=& I_{5c}'+I_8'.
 \eea
Due to the above cancelations, it remains to estimate the terms $I_3', I_4', I_{5b}', I_6', I_7', I_8'$.
By Agmon's inequality $(n=2)$,
 \be
\|d\|_{L^\infty} \leq C(1+\|\Delta d\|^{\frac12}), \ \ \ \ \
\|\nabla d\|_{L^\infty} \leq C(1+\|\nabla\Delta d\|^{\frac12}),
\label{agmon} \ee
where $C$ depends at most on $\|d\|_{H^1}$. Besides, we have
 \bea
 \|d\|_{H^2}&\leq& C(1+\|\Delta d\|)\leq C(1+\|\Delta d-f(d)\|+\|f(d)\|)\non\\
 &\leq& C\|\Delta d-f(d)\|+C(\|d\|_{L^6}^3+\|d\|)\leq C\|\Delta d-f(d)\|+C.\non
 \eea
 \bea \|\nabla \Delta d\|
 &\leq&  \|\nabla(\Delta d-f(d)\|+\|f'(d)\|_{L^\infty}\|\nabla
 d\| \leq \|\nabla(\Delta d-f(d)\|+ C(1+\|d\|_{L^\infty}^2)
 \non\\
 &\leq& \|\nabla(\Delta d-f(d)\|+C(1+\|\nabla\Delta d\|)\non\\
 &\leq& \|\nabla(\Delta d-f(d)\|+C(1+ \|\nabla \Delta
 d\|^\frac12\|\nabla d\|^\frac12+\|\nabla d\|)\non
 \\
 &\leq & \|\nabla(\Delta d-f(d)\| + \frac12 \|\nabla \Delta
 d\|+C, \non \eea
 which implies that
 \be \|\nabla \Delta d\|\leq \|\nabla(\Delta d-f(d)\| +C,
 \ee
 where $C$ depends at most on $\|d\|_{H^1}$.

 Then we have
 \bea |I_3'|&=& |\alpha((\Delta d-f)\cdot\nabla v, \Delta d-f)+\alpha(f\cdot\nabla v, \Delta
 d-f)|  \non\\
&\leq& \alpha\|\nabla v\|\|\Delta d-f\|^2_{L^4}+\alpha\|f\|\|\nabla
v\|_{L^4}\|\Delta d-f\|_{L^4}     \non\\
&\leq& C\|\nabla v\|(\|\Delta d-f\|\|\nabla(\Delta d-f)\|+\|\Delta
d-f\|^2)\non\\
&& +C\|\Delta v\|^\frac12\|\nabla v\|^\frac12(\|\Delta d-f\|^{\frac12}\|\nabla(\Delta
d-f)\|^{\frac12}+\|\Delta d-f\|)    \non\\
&\leq& \frac{\nu}{32}\|\Delta v\|^2+\frac{1}{32}\|\nabla(\Delta
d-f)\|^2\non\\
&& +C(\|\Delta d-f\|^4+\|\nabla v\|^4+\|\Delta d-f\|^2+\|\nabla
v\|^2).
 \label{similar 1}
  \eea
  \bea |I_6'| &\leq&
2\alpha\|\Delta d-f\|\|\nabla d\|_{L^\infty}\|D^2 v\| \non\\
&\leq& C\|\Delta d-f\|(\|\nabla(\Delta d-f)\|^\frac12+1)\|\Delta
v\| \non\\
&\leq& \frac{\nu}{32}\|\Delta v\|^2+\frac{1}{32}\|\nabla(\Delta
d-f)\|^2+C(\|\Delta d-f\|^4+\|\Delta d-f\|^2).
 \label{similar 2}
  \eea
 In a similar way, we obtain
 \bea
|I_4'|&\leq& \frac{\nu}{32}\|\Delta
v\|^2+\frac{1}{32}\|\nabla(\Delta d-f)\|^2\non\\
&& +C(\|\Delta d-f\|^4+\|\nabla v\|^4+\|\Delta
d-f\|^2+\|\nabla v\|^2). \\
|I_7'|&\leq& \frac{\nu}{32}\|\Delta v\|^2+\frac{1}{32}\|\nabla(\Delta
d-f)\|^2+C(\|\Delta d-f\|^4+\|\Delta d-f\|^2).
\eea
 Next,
 \bea |I_{5b}| &\leq&
2\|\Delta d-f\|_{L^4}\|\nabla v\|_{L^4}\|\nabla^2 d\|  \non\\
&\leq&  C(\|\nabla(\Delta d-f)\|^\frac12\|\Delta d-f\|^\frac12+\|\Delta d-f\|)
\|\Delta v\|^\frac12\|\nabla v\|^\frac12(\|\Delta
d-f\|+1)  \non\\
&\leq& C\|\nabla(\Delta d-f)\|^{\frac12}\|\Delta v\|^\frac12\|\Delta
d-f\|^\frac32\|\nabla v\|^\frac12+C\|\nabla(\Delta d-f)\|^{\frac12}\|\Delta v\|^\frac12\|\Delta
d-f\|^\frac12\|\nabla v\|^\frac12\non\\
&& + C\|\Delta v\|^\frac12\|\Delta
d-f\|^2\|\nabla v\|^\frac12+ C\|\Delta v\|^\frac12\|\Delta
d-f\|\|\nabla v\|^\frac12 \non\\
&\leq& \frac{1}{32}\|\nabla(\Delta d-f)\|^2+ \frac{\nu}{32}\|\Delta v\|^2+
C(\|\nabla v\|^4+\|\nabla
v\|^2+\|\Delta
d-f\|^4+\|\Delta
d-f\|^2).
 \eea
\bea
 |I_{5c}'| &\leq& \|f'(d)\|_{L^\infty}\|\Delta d-f\|^2\leq C(\|d\|_{L^\infty}^2+1)\|\Delta d-f\|^2 \non\\
&\leq& C(\|\Delta d-f\|^3+\|\Delta d-f\|^2)
 \eea
 \bea |I_8'| &\leq& \|\Delta
d-f\|_{L^4}\|f'(d)\|_{L^4}\|d\|_{L^4}\|\nabla v\|_{L^4} \non\\
&\leq& C(\|\nabla(\Delta d-f)\|+\|\Delta d-f\|)\|\nabla
v\|^{\frac12}\|\Delta v\|^{\frac12} \non\\
&\leq& \frac{\nu}{32}\|\Delta v\|^2+\frac{1}{32}\|\nabla(\Delta
d-f)\|^2+C\|\Delta d-f\|^4+C\|\nabla v\|^2.
  \eea
  A combination of all above estimates yields the inequality \eqref{he}. The proof is complete.
 \end{proof}
\br Since $\alpha \in [0, 1]$ is bounded, we see from the  proof in above that the
higher-order differential inequality \eqref{he} is uniform in $\alpha$.
\er

It is worth pointing out that on the
right-hand side of \eqref{he}, the term $A(t)$ replaces the constant
1 of the corresponding result in \cite{SL08} (for $\alpha=1$).
This improved inequality indeed has the advantage that it not only yields the uniform estimates of solutions to problem
\eqref{1}--\eqref{5}, which is important both in the proof for the global
existence result Theorem \ref{glo} and for the uniqueness Corollary \ref{uniq}, but also yields the decay property of velocity field $v$ (cf. \eqref{vcon1}) and helps to obtain the convergence rate (cf .\eqref{simplified higher-order inequality}).

 \bl \label{vcon}
 For any $t\geq 0$, the following uniform estimate holds
 \be
 \|v(t)\|_{H^1}+ \|d(t)\|_{H^2}\leq C, \label{ubdd}
 \ee
 where $C$ is a constant depending on $\nu, \|v_0\|_{H^1}, \|d_0
 \|_{H^2}$, but independent of $\alpha\in [0,1]$.  Furthermore,
 \be \lim_{t\rightarrow +\infty} (\|v(t)\|_{H^1}+ \|-\Delta d(t)+f(d(t))\|)=0. \label{vcon1}\ee
 \el
 \begin{proof}
 \eqref{intA} implies that $\int_0^{+\infty} A(t) dt<+\infty$. Then the uniform
 bound \eqref{ubdd} as well as the decay property \eqref{vcon1} follow from \eqref{he} and
 an analysis lemma \cite[Lemma 6.2.1]{Z04}.
 \end{proof}

\subsection{Convergence to Equilibrium}

The $\omega$-limit set of $(v_0,d_0)\in V\times
H^2_p(Q)\subset\subset L^2_p(Q) \times H^1_p(Q)$ is defined as
follows:
 \bea
 \omega(v_0,d_0) &= &\{ (v_\infty(x),d_\infty(x)) \mid\ \text{there
 \ exists\ } \{t_n\}\nearrow \infty  \text{\ such\ that\ } \non\\&&
 \ (v(t_n),d(t_n)) \rightarrow (v_\infty,d_\infty)\
 \text{in}\ L^2(Q) \times H^1(Q),\ \text{as}\ t_n\rightarrow +\infty
 \}.\non
 \eea
 We infer from Lemma  \ref{vcon} that
 \begin{proposition} \label{lim}  $\omega(v_0,d_0)$ is a nonempty bounded subset in $H^1_p(Q)\times
 H^2_p(Q)$, which is compact in $L_p^2(Q) \times H^1_p(Q)$. Besides, all asymptotic
 limiting points $(v_\infty, d_\infty)$ of problem \eqref{1}--\eqref{5} satisfy that $v_\infty=0$ and
 $d_\infty \in \mathcal{S}:=\{u\ | u \ \text{solves}\ \eqref{staaq} \}.$
 \end{proposition}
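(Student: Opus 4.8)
The plan is to exploit the uniform bounds and the decay information already established in Lemma \ref{vcon}, together with the standard compactness embedding $H^1_p(Q)\times H^2_p(Q)\hookrightarrow\hookrightarrow L^2_p(Q)\times H^1_p(Q)$. First I would observe that by \eqref{ubdd} the trajectory $\{(v(t),d(t))\}_{t\ge 0}$ stays in a bounded subset $\mathcal{B}$ of $H^1_p(Q)\times H^2_p(Q)$; since this space embeds compactly into $L^2_p(Q)\times H^1_p(Q)$, the set $\mathcal{B}$ is precompact in the weaker topology. Hence for any sequence $t_n\nearrow\infty$ one can extract a subsequence along which $(v(t_n),d(t_n))$ converges in $L^2_p(Q)\times H^1_p(Q)$, which shows $\omega(v_0,d_0)$ is nonempty. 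Boundedness of $\omega(v_0,d_0)$ in $H^1_p(Q)\times H^2_p(Q)$ follows because the limit points are weak limits of sequences bounded by the same constant $C$ in that space, and the $H^1_p(Q)\times H^2_p(Q)$ norm is weakly lower semicontinuous. Compactness of $\omega(v_0,d_0)$ in $L^2_p(Q)\times H^1_p(Q)$ is then immediate: it is a closed (by a standard diagonal argument) bounded subset of the space $H^1_p(Q)\times H^2_p(Q)$ that embeds compactly into $L^2_p(Q)\times H^1_p(Q)$.

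Next I would identify the limit points. The statement $v_\infty=0$ is a direct consequence of \eqref{vcon1}: since $\|v(t)\|_{H^1}\to 0$ as $t\to\infty$, every subsequential limit in $L^2_p(Q)$ must be $0$. To show $d_\infty$ solves the stationary equation \eqref{staaq} (i.e. $-\Delta d_\infty+f(d_\infty)=0$), I would again use \eqref{vcon1}, which gives $\|-\Delta d(t)+f(d(t))\|\to 0$. Along a sequence $t_n$ with $(v(t_n),d(t_n))\to(0,d_\infty)$ in $L^2_p(Q)\times H^1_p(Q)$, one has $d(t_n)\to d_\infty$ in $H^1_p(Q)$, hence (up to a further subsequence) $d(t_n)\to d_\infty$ a.e. and in every $L^q$ by Sobolev embedding, so $f(d(t_n))\to f(d_\infty)$ in $L^2_p(Q)$ by the polynomial growth of $f$ and the uniform $H^2$ bound. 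Consequently $\Delta d(t_n)=f(d(t_n))-(-\Delta d(t_n)+f(d(t_n)))$ converges in $L^2_p(Q)$ to $f(d_\infty)$; but $\Delta d(t_n)\to\Delta d_\infty$ in the sense of distributions, so $\Delta d_\infty=f(d_\infty)$ in $L^2_p(Q)$. Elliptic regularity (cf. \eqref{dh2}) then upgrades $d_\infty$ to $H^2_p(Q)$ and shows it is a genuine solution of \eqref{staaq}; combined with the uniform $H^2$ bound this also confirms $\omega(v_0,d_0)$ is bounded in $H^1_p(Q)\times H^2_p(Q)$.

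The only delicate point is the passage to the limit in the nonlinear term $f(d(t_n))$: one must justify that convergence of $d(t_n)$ merely in $H^1_p(Q)$ suffices. The uniform bound $\|d(t)\|_{H^2}\le C$ together with the interpolation/Agmon inequalities in Lemma \ref{agmon} give convergence of $d(t_n)$ in $L^\infty_p(Q)$ (in $2D$) or in $C(Q)$, which makes the limit in $f$ routine; this is where the $H^2$-uniform bound, rather than just the $L^\infty(0,\infty;H^1)$ bound from the energy law, is essential. Everything else is a straightforward assembly of precompactness and the decay estimate \eqref{vcon1}.
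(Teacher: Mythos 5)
Your argument is correct and is essentially the expanded version of what the paper does: the paper simply asserts that the proposition follows from Lemma \ref{vcon}, i.e.\ from the uniform bound \eqref{ubdd} plus the decay \eqref{vcon1} via the compact embedding $H^1_p(Q)\times H^2_p(Q)\hookrightarrow\hookrightarrow L^2_p(Q)\times H^1_p(Q)$, which is exactly the route you take. The only cosmetic issue is your citation of ``Lemma'' for the Agmon/interpolation inequalities (in the paper \eqref{agmon} is an equation inside the proof of Lemma \ref{he2d}, and the relevant embedding lemma in Section 2 is unlabeled), but the mathematical content --- using the uniform $H^2$ bound together with $H^1$ convergence to pass to the limit in $f(d(t_n))$ --- is sound.
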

It remains to prove the convergence for the director field $d$. For
any initial data $(v_0,d_0)\in V\times H^2_p(Q)$, it follows from
Lemma \ref{vcon} that $ \|d\|_{H^2}$ is uniformly bounded.
 Proposition \ref{lim} implies that there is an increasing unbounded sequence
$\{t_n\}_{n\in\mathbb{N}}$ and a function $d_\infty\in \mathcal{S}$
such that
   \be \lim_{t_n\rightarrow +\infty} \|d(t_n)-d_\infty\|_{H^1}
   =0. \label{secon}
   \ee
Let us look at the following elliptic periodic boundary value
problem
\be \left\{\begin{array}{l}  - \Delta d + f(d)=0,\quad x\in \mathbb{R}^n, \\
   d(x+e_i)=d(x),\quad x\in \mathbb{R}^n.\\
   \end{array}
   \label{staaq}
  \right.
 \ee
Define
 \be
 E(d):=\frac12\|\nabla d\|^2 + \int_Q F(d)dx.\label{EDD}
 \ee
 One can easily verify that the solution to \eqref{staaq} is equivalent to a
 critical point of $E(d)$. Then we introduce a \L
 ojasiewicz--Simon type inequality that is related to our problem.
 \bl\label{ls}{\rm [\L ojasiewicz--Simon inequality]}
 Let $\psi$ be a critical point of $E(d)$. Then there exist constants
 $\theta\in(0,\frac12)$ and $\beta>0$ depending on $\psi$ such that
 for any $d\in H^1_p(Q)$ satisfying $\|d-\psi\|_{H_p^1(Q)}<\beta$, it
 holds
 \be
 \|-\Delta d+f(d)\|_{(H^1_p(Q))'}\geq
 |E(d)-E(\psi)|^{1-\theta},
 \ee
 where $(H^1_p(Q))'$ is the dual space of $H^1_p(Q)$.
 \el
 \br Lemma \ref{ls} can be viewed as an extended version of Simon's
result \cite{S83} for scalar functions under the $L^2$-norm. We
refer to \cite[Chapter 2, Theorem 5.2]{Hu} for a proof.
 \er

Since our system \eqref{1}-\eqref{5} enjoys exactly the same Lyapunov functional $\mathcal{E}(t)$ and the basic energy law \eqref{ENL} as the small molecule system studied in \cite{LL95, LW08}, based on this important observation, one can see that the application of \L
ojasiewicz--Simon inequality Lemma \ref{ls} indeed does not rely on
the special kinematic transport property of the flow.
Therefore, we can follow the steps in \cite[Section
3.1]{LW08} to prove that there exists a time $t_0>0$ such that for $t\geq t_0$, it holds
 \bea
 \frac{d}{dt}(\mathcal{E}(t)-E(d_\infty))^\theta+ C(\|\nabla v\|+\|-\Delta d+f(d)\|) \leq 0, \quad \forall\ t\geq
 t_0.\label{ly3}
 \eea
 Integrating with respect to time, we obtain
 \be  \int_{t_0}^{+\infty} (\|\nabla v(\tau)\|+ \|-\Delta
 d(\tau)+f(d(\tau))\|)d\tau<+\infty.\label{intvd}
 \ee
 Now we take the different kinematic transport of the director
 $d$ in equation \eqref{3} into account.  By the the Sobolev embedding theorem and uniform estimate \eqref{ubdd}, we conclude the following key observation
 \bea \| d_t\|
 &\leq& \|v\cdot \nabla d\|+\alpha\|(\nabla v)d\|+(1-\alpha)\|(\nabla^T v)d\|+ \|-\Delta
 d+f(d)\|\non\\
 & \leq & \|v\|_{L^4}\|\nabla d\|_{L^4}+ \|d\|_{L^\infty}\|\nabla v\|+ \|-\Delta d+f(d)\|\non\\
 &\leq & C\|\nabla v\|+ \|-\Delta d+f(d)\|,\label{dt}
 \eea
 which together with \eqref{intvd} yields that
 \be \int_{t_0}^{+\infty} \| d_t(\tau)\| d\tau<+\infty.
 \ee
 This fact and the sequence convergent result \eqref{secon} easily implies that
 \be \lim_{t\rightarrow +\infty}
 \|d(t)-d_\infty\|=0.
 \ee
 Since $d(t)$ is uniformly bounded in $H^2(Q)$ (cf. \eqref{ubdd}), by standard interpolation inequality we have
 \be \lim_{t\rightarrow +\infty}
 \|d(t)-d_\infty\|_{H^1}=0.\label{conh1}
 \ee
Moreover, the decay property of the quantity $A(t)$ (cf. Lemma
\ref{vcon}) will provide higher-order convergence of $d$. We notice that
 \bea \|\Delta d- \Delta d_\infty\|&\leq& \| \Delta d- \Delta d_\infty
 -f(d)+f(d_\infty)\|+ \|f(d)-f(d_\infty)\|\non
 \\
 &\leq& \|\Delta
 d-f(d)\|+\|f'(\xi)\|_{L^4}\|d-d_\infty\|_{L^4}\non\\
 &\leq& \|\Delta
 d-f(d)\|+C\|d-d_\infty\|_{H^1}.\label{kkk}
 \eea
 The above estimate together with \eqref{vcon1} and \eqref{conh1} yields
 \be \lim_{t\rightarrow +\infty}
 \|d(t)-d_\infty\|_{H^2}=0.\label{conh2}
 \ee

\subsection{Estimates on Convergence Rate}
We shall prove the estimate for convergence rate
\eqref{rate} using a similar argument as in
\cite{HJ01,WGZ1,W07,LW08} and references therein.
As has been shown in the literature, an estimate on the convergence rate in certain lower-order norm
could be obtained directly from the \L ojasiewicz--Simon approach
(cf. \cite{HJ01}). It
follows from the convergence of $(v,d)$ (cf. \eqref{cgce}) that the
decreasing energy functional $\mathcal{E}(t)$ satisfies
$$ \lim_{t\to+\infty} \mathcal{E}(t)= E(d_\infty), \quad \text{and}\quad \mathcal{E}(t)\geq E(d_\infty),\
 \forall t\geq 0.$$
  On the other hand, we infer from Lemma \ref{ls} and
\eqref{ly3} that
 \bea
 \frac{d}{dt}(\mathcal{E}(t)-E(d_\infty))+ C(\mathcal{E}(t)-E(d_\infty))^{2(1-\theta)}\leq 0, \quad \forall\ t\geq
 t_0.\label{ly4}
 \eea
As a consequence,
 $$ 0\leq \mathcal{E}(t)-E(d_\infty)\leq
 C(1+t)^{-\frac{1}{1-2\theta}},\quad \forall\ t\geq
 t_0. $$
  Integrating \eqref{ly3} on
$(t,+\infty)$, where $t\geq t_0$,  it follows from \eqref{dt} that
 \bea \int_t^{+\infty} \|d_t(\tau)\| d\tau & \leq& \int_t^{+\infty}(C\|\nabla v(\tau)\|+
 \|-\Delta d(\tau)+f(d(\tau))\|)d\tau\non\\
 &\leq&(\mathcal{E}(t)-E(d_\infty) )^\theta \leq
 C(1+t)^{-\frac{\theta}{1-2\theta}}.\non
  \eea
Adjusting the constant $C$ properly, we get
 \be
    \|d(t)-d_\infty\|\leq C(1+t)^{-\frac{\theta}{1-2\theta}}, \quad t\geq 0.\label{rate1}
 \ee

As in \cite{WGZ1,LW08}, higher-order estimates on the convergence rate
can be achieved by constructing proper differential inequalities via
energy method. Lemma \ref{vcon} implies that the steady state
corresponding to problem \eqref{1}--\eqref{5} can be
 reduced to the following system:
 \bea
 \nabla P_\infty+\nabla\left(\frac{|\nabla d_\infty|^2}{2}\right)&=&-\nabla d_\infty\cdot \Delta d_\infty,\label{s1a}\\
 -\Delta d_\infty+f(d_\infty)&=&0,\label{s2a}
 \eea
subject to periodic boundary conditions. In \eqref{s1a}, we have
used the fact that $\nabla\cdot(\nabla d_\infty\odot\nabla
d_\infty)=\nabla\left(\frac{|\nabla d_\infty|^2}{2}\right) + \nabla
d_\infty\cdot\Delta d_\infty$ (cf. \cite{LL95}). Subtracting the
stationary problem \eqref{s1a}--\eqref{s2a} from the evolution
problem \eqref{1}--\eqref{3}, we obtain that
 \bea
 && v_t+v\cdot\nabla v-\nu \Delta v+\nabla (P-P_\infty)
 + \nabla \left(\left(\frac{|\nabla d|^2}{2}\right)-\left(\frac{|\nabla
 d_\infty|^2}{2}\right)\right)\non\\
  &=&-\nabla\cdot[\alpha (\Delta d-f(d))\otimes
 d-(1-\alpha)d\otimes (\Delta d-f(d))]-\nabla d\cdot \Delta d +\nabla d_\infty\cdot \Delta d_\infty
 ,\label{11}
 \eea
 \vspace{-12mm}
 \bea
  \qquad \ \ \nabla \cdot v &=& 0,\label{22}\\
  d_t+v\cdot\nabla d-\alpha (\nabla v)d+(1-\alpha)(\nabla^T v)d&=&\Delta (d-d_\infty)
 -f(d)+f(d_\infty).\label{33}
  \eea
Multiplying \eqref{11} by $v$ and \eqref{33} by $-\Delta
d+f(d)=-\Delta (d-d_\infty)+f(d)-f(d_\infty)$, respectively,
integrating over $Q$, and adding the results together, we have
 \bea
  && \frac{d}{dt} \left(\frac12\|v\|^2+\frac12\|\nabla d-\nabla
  d_\infty\|^2+\int_Q [F(d)-F(d_\infty)- f(d_\infty)(d-d_\infty)]dx\right)\non\\
  && \quad +\nu\|\nabla v\|^2+ \|\Delta d-f(d)\|^2\non\\
  &=& (v, \nabla d_\infty\cdot \Delta d_\infty)=
   (v, \nabla d_\infty\cdot (\Delta d_\infty-f(d_\infty)))+ (v, \nabla F (d_\infty)) \non\\
  &=&0.
  \label{ra1}
  \eea
 Multiplying \eqref{33} by $d-d_\infty$ and integrating over $Q$,
we obtain
 \bea && \frac12\frac{d}{dt}\|d-d_\infty\|^2+\|\nabla (d-d_\infty)\|^2\non\\
 &=&
 -(v\cdot\nabla d,d-d_\infty) + \alpha (( \nabla v)d, d-d_\infty)-(1-\alpha)(( \nabla^T v)d, d-d_\infty)\non\\
 &&  -(f(d)-f(d_\infty),
 d-d_\infty)\non\\
 &:=&J_1.\label{ra3}
 \eea
 It follows from the uniform estimates
 \eqref{ubdd} and $\alpha \in [0,1]$ that
  \bea
 |J_1|&\leq& \|v\|_{L^4}\|\nabla
 d\|_{L^4}\|d-d_\infty\|+\|\nabla v\|\|d\|_{L^\infty}\|d-d_\infty\| +\|f'(\xi)\|_{L^\infty}\|d-d_\infty\|^2\non\\
 &\leq& C\|\nabla v\|\|d-d_\infty\|+
 C\|d-d_\infty\|^2\leq \varepsilon_1\|\nabla v\|^2+C\|d-d_\infty\|^2,\label{ra4}
 \eea
 where $\xi=ad+(1-a)d_\infty$, $a\in (0,1)$.\\
 Multiplying \eqref{ra3} by $\mu>0$ and adding the resultant to
\eqref{ra1}, using \eqref{ra4}, we get
 \bea
  && \frac{d}{dt} \left(\frac12\|v\|^2+\frac12\|\nabla d-\nabla
  d_\infty\|^2+ \frac{\mu}{2} \|d-d_\infty\|^2 +\int_Q (F(d)-F(d_\infty)) dx\right.\non\\
  &&\ \ \left.-
  \int_Q f(d_\infty)(d-d_\infty)dx\right)+\left(\nu-\mu\varepsilon_1\right)\|\nabla v\|^2
  + \|\Delta d-f(d)\|^2  +\mu\|\nabla(d-d_\infty)\|^2  \non\\
  &\leq& C_1\mu \|d-d_\infty\|^2.\label{ra5}
  \eea
By the Taylor's expansion, we deduce that
 \be \left\vert\int_Q
 [F(d)-F(d_\infty)-f(d_\infty)(d-d_\infty)]
         dx\right\vert\leq
          \|f'(\xi)\|_{L^\infty}\|d-d_\infty\|^2 \leq
          C_2\|d-d_\infty\|^2,
          \label{rate6}
  \ee
  where $\xi=ad+(1-a)d_\infty$, $a\in (0,1)$.
Let us define now, for $t\geq 0$,
 \bea y(t)&= & \frac12\|v(t)\|^2+\frac12\|\nabla d(t)-\nabla
  d_\infty\|^2+ \frac{\mu}{2} \|d(t)-d_\infty\|^2 +\int_Q (F(d(t))dx- F(d_\infty)) dx\non\\
  &&\ \ -
  \int_Q f(d_\infty)(d(t)-d_\infty)dx. \label{y1}
  \eea
In \eqref{y1}, if we choose $ \mu \geq  1+2C_2>0$, then there exists a constant $k>0$ that
  \be k(\|v(t)\|^2+\|d(t)-d_\infty\|_{H^1}^2)\geq  y(t)\geq \frac12(\|v(t)\|^2+\|d(t)-d_\infty\|_{H^1}^2).
 \label{y1a}
  \ee
 Next, in \eqref{ra5} we take $\varepsilon_1=\frac{\nu}{2\mu}$, then we can infer from \eqref{ra5} and \eqref{y1a} that there exist constants $C_3,C_4,C_5>0$ such that the following inequality holds
  \be
  \frac{d}{dt} y(t)+C_3 y(t)+C_4A(t)\leq C_5\|d(t)-d_\infty\|^2\leq
  C(1+t)^{-\frac{2\theta}{1-2\theta}}.\label{ra7}
  \ee
  It follows that (cf. \cite{W07,WGZ1,LW08})
  \be y(t)\leq C(1+t)^{-\frac{2\theta}{1-2\theta}}, \quad \forall\ t\geq 0,\label{ratey}
  \ee
  which together with \eqref{y1a} implies that
  \be \|v(t)\|+\|d(t)-d_\infty\|_{H^1}\leq C(1+t)^{-\frac{\theta}{1-2\theta}}, \quad \forall\ t\geq 0.\label{rate2}
  \ee

Finally, we prove the convergence rate of $(v, d)$ in $V\times H^2$.
It follows from Lemma \ref{he2d} and Lemma \ref{vcon} that
 \be \frac{d}{dt}A(t) \leq C_6A(t).
  \label{simplified higher-order inequality}
  \ee
Multiplying \eqref{simplified higher-order inequality} with
$\alpha_1=\frac{C_4}{2C_6}$, adding with \eqref{ra7}, we arrive at
 \be \frac{d}{dt}[y(t)+\alpha_1A(t) ]+C[y(t)+\alpha_1A(t) ] \leq
C(1+t)^{-\frac{2\theta}{1-2\theta}}, \ee
 from which we conclude that
 \be  y(t)+\alpha_1 A(t) \leq
C(1+t)^{-\frac{2\theta}{1-2\theta}}, \ \ \ \ \ \forall t \geq 0.
 \ee
This together with \eqref{y1a} yields
   \be \|\nabla v(t)\|^2+\|\Delta d(t)-f(d(t))\|^2=A(t)\leq C(1+t)^{-\frac{2\theta}{1-2\theta}}, \quad \forall\ t\geq 0.\label{rate3a}
  \ee
Recalling \eqref{kkk}, it follows from \eqref{rate3a} and \eqref{rate2} that
 \be \|\Delta d(t)-\Delta d_\infty\| \leq C(1+t)^{-\frac{\theta}{1-2\theta}}, \quad \forall\ t\geq 0.\label{rate4a}
  \ee
Summing up, we can deduce the required estimate \eqref{rate} from
\eqref{rate2}, \eqref{rate3a} and \eqref{rate4a}.

The proof of Theorem \ref{main2d} is complete. \ \ $\square$

\section{Results in Three Dimensional Case }
\setcounter{equation}{0}

In this section, we study the problem in 3D. First, the following
higher-order energy inequality will enable us to conclude the
existence and uniqueness of a local strong solution to problem
\eqref{1}-\eqref{3} in $3D$.

 \bl\label{highor3d}
In $3D$ case, the following inequality holds for the
 classical solution $(v,d)$ to problem \eqref{1}--\eqref{5}:
\begin{eqnarray}
\frac{d}{dt}A(t)+ \nu\|\Delta v(t)\|^2+ \|\nabla (\Delta
d-f(d(t)))\|^2 \leq C_{\ast}(A(t)^4+A(t)), \label{Eins}
\end{eqnarray}
where $A(t)$ is defined as in \eqref{A} and $C_{\ast}>0$ is a
constant that only depends on $\nu$, $f$, $Q$, $\|v_0\|$ and
$\|d_0\|_{H^1}$ but not on $\alpha$.
 \el
\begin{proof}
First, like in $2D$, we still have the uniform $L^2\times H^1$
estimate for the solution $(v, d)$ to problem \eqref{1}--\eqref{5}
(cf. Lemma \ref{v0h1})
 $$
\|v(t)\|+\|d(t)\|_{H^1}\leq C,\quad \forall\ t\geq 0,
 $$
 where $C$ is a constant only depending on
$\|v_0\|$ and $\|d_0\|_{H^1}$. Recalling \eqref{diffA} and keeping
\eqref{cancellation 1}, \eqref{cancellation 2} in mind, we have
 \be
 \frac12\frac{d}{dt}A(t)+(\nu\|\Delta v\|^2+\|\nabla (\Delta-f(d))\|^2)=I_1+I_3'+I_4'+I_{5b}+I_{5c}'+I_6'+I_7'+I_8'.
 \label{3r1a}
 \ee
 We re-estimate the right-hand side of \eqref{3r1a} term by term.
 \bea |I_1| &\leq& \|\Delta v\|\|v\|_{L^\infty}\|\nabla
v\|\leq C\|\Delta
v\|(\|\Delta v\|^{\frac{3}{4}}+1)\|\nabla v\|\non\\
& \leq &\frac{\nu}{16}\|\Delta v\|^2+\frac{C}{\nu^7}\|\nabla
v\|^8.\non
 \eea
 In $3D$ case, we have
 \bea
 \|\nabla \Delta d\|&\leq&  \|\nabla (\Delta d-f(d))\|+\|f'(d)\|_{L^3}\|\nabla
 d\|_{L^6}\leq \|\nabla (\Delta d-f(d))\|+C(\|\Delta d\|+1)\non\\
 &\leq& \|\nabla (\Delta d-f(d))\|+ C\|\Delta d-f(d)\|+ C.\label{ndd3d}
 \eea
 As a result,
 \bea
 && |I_3'|+|I_4'|\non\\
 &\leq &\alpha|(\Delta d-f(d), \Delta d\cdot \nabla v)|+(1-\alpha)|(\Delta d-f(d), \Delta d\cdot \nabla^T v)|\non\\
 &\leq& \|\Delta d-f(d)\|_{L^3}\|\Delta d\|\|\nabla v\|_{L^6} \non\\
 &\leq& C(\|\nabla (\Delta d-f(d))\|^\frac12\|\Delta
 d-f(d)\|^\frac12+ \|\Delta d-f(d)\|)(\|\Delta d-f(d)\|+1)\|\Delta
 v\|\non\\
  &\leq & \frac{1}{10}\|\nabla (\Delta
 d-f(d))\|^2+\frac{\nu}{16}\|\Delta v\|^2+C(\|\Delta
 d-f(d)\|^6+\|\Delta
 d-f(d)\|^2).\label{ii34}
 \eea
 \bea
  && |I_6'|+|I_7'|\non\\
 &\leq & 2\alpha|(\Delta d-f(d), (\nabla
 d\cdot\nabla )\cdot\nabla v)|+ 2(1-\alpha)|(\Delta d-f(d), (\nabla
 d\cdot\nabla )\cdot\nabla^T v)|\non\\
  &\leq& C\|\Delta
d-f\|_{L^3}\|\nabla d\|_{L^6}\| v\|_{H^2} \non\\
&\leq& C(\|\nabla (\Delta d-f(d))\|^\frac12\|\Delta
 d-f(d)\|^\frac12+ \|\Delta d-f(d)\|)(\|\Delta d-f(d)\|+1)\|\Delta v\| \non\\
&\leq& \frac{1}{10}\|\nabla (\Delta
 d-f(d))\|^2+\frac{\nu}{16}\|\Delta v\|^2+C(\|\Delta
 d-f(d)\|^6+\|\Delta
 d-f(d)\|^2).\label{ii67}
 \eea
 Next,
 \bea |I_{5b}| &\leq&
2\|\Delta d-f\|_{L^3}\|\nabla v\|_{L^6}\|\nabla^2 d\|  \non\\
&\leq&  C(\|\nabla(\Delta d-f)\|^\frac12\|\Delta
d-f\|^\frac12+\|\Delta d-f\|) \|\Delta v\|(\|\Delta
d-f\|+1)  \non\\
&\leq&\frac{1}{10}\|\nabla (\Delta
 d-f(d))\|^2+\frac{\nu}{16}\|\Delta v\|^2+C(\|\Delta
 d-f(d)\|^6+\|\Delta
 d-f(d)\|^2).\label{ii5b}
 \eea
\bea
 |I_{5c}'| &\leq&
 \|f'(d)\|_{L^\infty}\|\Delta d-f\|^2\leq C(\|d\|_{L^\infty}^2+1)\|\Delta d-f\|^2 \non\\
&\leq& C(\|d\|_{H^2}+1) \|\Delta d-f\|^2\leq C(\|\Delta d-f\|+1)
\|\Delta d-f\|^2
 \eea
 \bea |I_8'| &\leq& \|\Delta
d-f\|_{L^6}\|f'(d)\|_{L^3}\|d\|_{L^\infty}\|\nabla v\| \non\\
&\leq& C(\|\nabla(\Delta d-f)\| +\|\Delta d-f\|) (\|\Delta
d-f\|+1)\|\nabla
v\| \non\\
&\leq& \frac{1}{10}\|\nabla(\Delta d-f)\|^2+C(\|\Delta
d-f\|^4+\|\Delta d-f\|^2+\|\nabla v\|^4+\|\nabla v\|^2).\label{ii8}
  \eea
Collecting all the estimates above, we can conclude \eqref{Eins}.
The proof is complete.
  \end{proof}
Next, we show the following result that is useful in understanding
the long-time behavior of system \eqref{1}--\eqref{5} (we refer to
\cite{LL95} for the corresponding result of the small molecule
system):
  \begin{theorem}\label{sm}
For any $R>0$ and initial data $(v_0, d_0)\in V \times H_p^2(Q)$,
whenever $\|\nabla v_0\|^2+\|\Delta d_0-f(d_0)\|^2\leq R$, there
exists a constant $\varepsilon_0 \in(0,1)$, depending on $\nu$, $f$,
$Q$ and
$R$, such that either\\
(a) for the energy functional
$\mathcal{E}(t)=\frac12\|v(t)\|^2+\frac12\|\nabla
d(t)\|^2+\int_{Q}F(d(t))\,dx$, there exits $T_*\in (0,+\infty)$ such
that
\[\mathcal{E}(T_*) \leq \mathcal{E}(0)-\varepsilon_0,  \]
or\\
(b) problem \eqref{1}--\eqref{5} has a unique global classical
solution $(v,d)$ with uniform estimate
 \begin{equation}
 \|v(t)\|_{H^1(Q)}+\|d(t)\|_{H^2(Q)}\leq C,\quad \forall\ t\geq0.
 \end{equation}
\end{theorem}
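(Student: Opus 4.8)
The plan is to run a dichotomy argument driven by the basic energy law together with the $3D$ higher-order differential inequality \eqref{Eins}. Let $[0,T_{\max})$ be the maximal interval of existence of the local strong solution $(v,d)$ issued from $(v_0,d_0)$: its local existence, uniqueness, smoothing for $t>0$, and the continuation criterion ---~the solution extends past $T_{\max}$ once $\|v\|_{H^1}+\|d\|_{H^2}$ stays bounded~--- follow from Lemma \ref{highor3d} and the (modified Galerkin / Serrin bootstrap) method recalled after Theorem \ref{glo}. On $[0,T_{\max})$ the energy law \eqref{ENL} holds, so with $c_0:=\min\{\nu,1\}$ and $A(t)$ as in \eqref{A} one has $-\tfrac{d}{dt}\mathcal{E}(t)=\nu\|\nabla v\|^2+\|\Delta d-f(d)\|^2\ge c_0A(t)$, hence $c_0\int_0^tA(\tau)\,d\tau\le\mathcal{E}(0)-\mathcal{E}(t)$ for every $t\in[0,T_{\max})$. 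Since $\mathcal{E}(0)-\mathcal{E}(t)$ is exactly the quantity compared in alternative (a), it suffices to prove the implication: \emph{if} $\mathcal{E}(t)>\mathcal{E}(0)-\varepsilon_0$ for all $t\in[0,T_{\max})$ (with $\varepsilon_0$ fixed below), \emph{then} alternative (b) holds. Its contrapositive produces a time $T_*\in(0,T_{\max})\subset(0,+\infty)$ with $\mathcal{E}(T_*)\le\mathcal{E}(0)-\varepsilon_0$, i.e.\ alternative (a); so (a) and (b) exhaust all possibilities.

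So assume $\mathcal{E}(0)-\mathcal{E}(t)<\varepsilon_0$ on $[0,T_{\max})$; then $\int_0^tA(\tau)\,d\tau<\varepsilon_0/c_0$ throughout. Dropping the nonnegative terms in \eqref{Eins} gives $\tfrac{d}{dt}A(t)\le C_*(A(t)^4+A(t))$, where by Lemma \ref{highor3d} the constant $C_*$ depends only on $\nu,f,Q$ and $R$ (the $\|v_0\|,\|d_0\|_{H^1}$ dependence listed there is absorbed into $R$ via the Poincar\'e inequality on $V$ and, for $d_0$, by testing $-\Delta d_0+f(d_0)$ against $d_0$). Now fix
\[\varepsilon_0:=\min\Big\{\tfrac12,\ \frac{c_0\ln(3/2)}{C_*(8R^3+1)}\Big\}\in(0,1),\]
which depends only on $\nu,f,Q,R$. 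Put $T^\dagger:=\sup\{t\in[0,T_{\max}):A(s)\le 2R\ \text{for all}\ s\in[0,t]\}$; since $A(0)=\|\nabla v_0\|^2+\|\Delta d_0-f(d_0)\|^2\le R<2R$ we have $T^\dagger>0$. On $[0,T^\dagger)$ one has $A^4+A=A(A^3+1)\le(8R^3+1)A$, so Gronwall's inequality yields $A(t)\le A(0)\exp\!\big(C_*(8R^3+1)\int_0^tA\,d\tau\big)\le R\exp\!\big(C_*(8R^3+1)\varepsilon_0/c_0\big)\le\tfrac32R$. By continuity of $A$ this rules out $T^\dagger<T_{\max}$ (which would force $A(T^\dagger)=2R$), so $T^\dagger=T_{\max}$ and $A(t)\le\tfrac32R$ on all of $[0,T_{\max})$.

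With $A$ globally bounded the rest is routine. From the energy law, $\|v(t)\|+\|d(t)\|_{H^1}\le C$ (cf.\ \eqref{bd1}); moreover $\|\nabla v(t)\|^2\le A(t)\le\tfrac32R$, while $\|\Delta d(t)\|\le\|\Delta d-f(d)\|+\|f(d)\|\le A(t)^{1/2}+C(\|d\|_{L^6}^3+\|d\|)\le C$ by the $3D$ embedding $H^1_p(Q)\hookrightarrow L^6(Q)$; hence $\|d(t)\|_{H^2}\le C$ by the elliptic estimate \eqref{dh2}. Thus $\sup_{t\in[0,T_{\max})}(\|v(t)\|_{H^1}+\|d(t)\|_{H^2})\le C=C(\nu,f,Q,R)$, the continuation criterion forces $T_{\max}=+\infty$, the bootstrap argument upgrades the solution to a global classical one, and Lemma \ref{cd} (with $M=C$) gives uniqueness. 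This is exactly alternative (b). Finally, since $C_*$ --- and hence $\varepsilon_0$ and the bound $C$ of (b) --- is independent of $\alpha\in[0,1]$ by Lemma \ref{highor3d}, the conclusion is uniform in the shape parameter.

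The only genuine difficulty is the absorption step: \eqref{Eins} is a \emph{superlinear} ($A^4$) differential inequality, so a bound on $A$ cannot propagate by itself; one must use that the failure of alternative (a) makes the \emph{total} dissipation $\int_0^{T_{\max}}A$ as small as we please, so that the Gronwall exponent $C_*(8R^3+1)\int_0^tA$ stays below $\ln(3/2)$. The interlocking choice of the continuity threshold $2R$ and of $\varepsilon_0\sim c_0/\big(C_*(1+R^3)\big)$ is the heart of the matter; everything else is the local well-posedness theory of Lemma \ref{highor3d} and the $L^2\times H^1$ bound from the basic energy law.
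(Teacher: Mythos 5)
Your overall strategy is genuinely different from the paper's, and it is viable. The paper iterates a local existence step: the comparison ODE $Y'=C_*(Y^4+Y)$, $Y(0)\le R$, gives a lifespan $t_0=\frac12 T_{max}(R,C_*)$ depending only on $R$, and the smallness of the total dissipation (failure of (a)) produces in each window of length $t_0$ a time $t_*$ with $A(t_*)\le R$, from which the local theory restarts indefinitely. You instead run a single continuity argument on the set $\{A\le 2R\}$, where the quartic term linearizes, so that the smallness of $\int_0^\infty A$ propagates the bound. Your remark that $C_*$ (hence $\varepsilon_0$) can be made to depend only on $\nu,f,Q,R$ by controlling $\|v_0\|$ and $\|d_0\|_{H^1}$ through the hypothesis $A(0)\le R$ addresses a dependency the paper glosses over, and is correct.

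There is, however, a concrete error in the absorption step, which you yourself single out as the heart of the matter. From $\frac{d}{dt}A\le C_*(8R^3+1)A$ on $[0,T^\dagger)$, Gronwall gives $A(t)\le A(0)\exp\big(C_*(8R^3+1)\,t\big)$, \emph{not} $A(t)\le A(0)\exp\big(C_*(8R^3+1)\int_0^tA\,d\tau\big)$; the latter inequality is false in general (take $A'=KA$ exactly with $A(0)$ and $t$ small). To put $\int_0^tA$ into a Gronwall exponent the coefficient of $A$ would itself have to be of order $A$, but $A^4+A=(A^3+1)A$ and the $+1$ cannot be dominated by $A$ near $A=0$: this is exactly the linear term of \eqref{Eins}, which permits exponential growth no matter how small $\int A$ is. The fix is easy and preserves your architecture: integrate additively rather than multiplicatively, $A(t)\le A(0)+C_*(8R^3+1)\int_0^tA\,d\tau\le R+C_*(8R^3+1)\varepsilon_0/c_0$, and choose $\varepsilon_0\le\min\{\tfrac12,\ c_0R/(2C_*(8R^3+1))\}$ so that $A\le\tfrac32R<2R$ on $[0,T^\dagger)$ and the continuity argument closes. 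Your stated $\varepsilon_0$ must be adjusted accordingly: with it, the corrected bound reads $A\le R+\ln(3/2)$, which exceeds $2R$ for small $R$, so the threshold would not be preserved. Everything after the uniform bound on $A$ (the $H^1\times H^2$ estimate, continuation, uniqueness via Lemma \ref{cd}, uniformity in $\alpha$) is fine.
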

\begin{proof}
\textbf{Step 1}. A local well-posedness result. We consider the
following initial value problem of a nonlinear ODE:
$$ \frac{d}{dt}Y(t)=C_*(Y(t)^4+Y(t)),\quad Y(0)=A(0)=\|\nabla v_0\|^2+\|\Delta d_0-f(d_0)\|^2 \leq R.$$
 We denote by $I=[0,T_{max})$ the maximum existence interval of
$Y(t)$ such that
$$
\lim_{t\rightarrow T_{max}^-} Y(t)=+\infty.
$$
 It easily follows from \eqref{Eins} that for any $t\in I$, $0\leq
A(t)\leq Y(t)$.
 Consequently, $A(t)$ exists on $I$. Moreover,
 $T_{max}$ is determined by $Y(0)$ and $C_*$ such that $T_{max}=T_{max}(Y(0),C_*)$ is
 increasing when $Y(0)\geq 0$ is decreasing. Taking $t_0=\frac12 T_{max}(R, C_*)> 0$, then it
 follows that $Y(t)$ as well as $A(t)$ is uniformly bounded on
$[0, t_0]$. This fact together with the approximation procedure in
\cite{SL08} and Lemma \ref{cd} implies the local existence of a
unique (classical) solution of problem \eqref{1}--\eqref{5} at least
on $[0,t_0]$.

\textbf{Step 2}. If $(a)$ is not true, we have for all $t\geq 0$, $
\mathcal{E}(t) \geq \mathcal{E}(0)-\varepsilon_0$. From the basic
energy law \eqref{ENL}, we infer that
\[\int_0^{+\infty}(\nu\|\nabla v(t)\|^2+\|\Delta d(t)-f(d(t))\|^2)dt
\leq \varepsilon_0.  \]
 Hence, there exists
$t_* \in [\frac{t_0}{2}, t_0]$ that
\[ \nu\|\nabla v(t_*)\|^2+\|\Delta
d(t_*)-f(d(t_*))\|^2 \leq \frac{2\varepsilon_0}{t_0}.  \]
 Choosing
$\varepsilon_0>0$ such that
$$\frac{2}{\min\{1,\nu\}}\frac{\varepsilon_0}{t_0} \leq R,$$
we have $A(t_*)\leq R$. Taking $t_*$ as the initial time, we infer
from the above argument that $A(t)$ is uniformly bounded at least on
$[0, \frac{3t_0}{2}]\subset [0,t_*+t_0]$. Moreover, its bound only
depends on $R$ and $C_*$, but not on the length of existence
interval. Therefore, we can extend the local solution obtained in
step 1 to infinity such that
 \be A(t)\leq
C,\quad \forall\ t\geq0, \label{bdA}
 \ee
where $C$ is uniform in time. The proof is complete.
\end{proof}

Theorem \ref{sm} implies that if the energy $\mathcal{E}$ does not
"drop" too fast, problem \eqref{1}--\eqref{5} admits a unique global
classical solution. This assumption can be verified for certain
special cases, which are stated in the following corollaries.

\begin{corollary}\label{near}
 Let $d^*\in H_p^2(Q)$ be an absolute minimizer of the functional
 $ E(d)=\frac12\|\nabla d\|^2+ \int_Q F(d)dx$. There exists a constant $\sigma\in (0,1]$ that may depend on $\nu$, $f$, $Q$ and $d^*$
 such that for any initial data $(v_0, d_0)\in V\times H^2_p(Q)$
 satisfying $\|v_0\|_{H^1}+\|d_0-d^*\|_{H^2}\leq\sigma$,  problem
 \eqref{1}--\eqref{5} admits a unique global classical solution.
 \end{corollary}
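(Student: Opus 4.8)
The plan is to obtain Corollary~\ref{near} directly from the dichotomy in Theorem~\ref{sm}: we will choose $\sigma$ so small that, whenever $\|v_0\|_{H^1}+\|d_0-d^*\|_{H^2}\leq\sigma$, alternative $(a)$ of Theorem~\ref{sm} is impossible, so alternative $(b)$ must hold and already furnishes the desired unique global classical solution with a uniform $H^1\times H^2$ bound (uniqueness being in any case guaranteed a posteriori by Lemma~\ref{cd} and Corollary~\ref{uniq}).

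First I would fix the radius $R$ that feeds Theorem~\ref{sm}. Restricting to $\sigma\leq 1$ we have $\|\nabla v_0\|\leq 1$, and since $H^2_p(Q)\hookrightarrow L^\infty(Q)$ and $\|d_0\|_{H^2}\leq\|d^*\|_{H^2}+1$,
\[
\|\Delta d_0-f(d_0)\|\leq\|d_0\|_{H^2}+|Q|^{\frac12}\|f(d_0)\|_{L^\infty}\leq C(d^*),
\]
so that $\|\nabla v_0\|^2+\|\Delta d_0-f(d_0)\|^2\leq R$ for some constant $R=R(f,Q,d^*)$. With this $R$ now fixed, Theorem~\ref{sm} supplies the associated $\varepsilon_0=\varepsilon_0(\nu,f,Q,R)\in(0,1)$, which therefore depends only on $\nu,f,Q,d^*$.

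The key step is then to bound the total energy drop along any solution issued from such data, uniformly on its (a priori only local) existence interval. Since $d^*$ is an absolute minimizer of $E$ and $d(t)$ lies in the admissible class, the basic energy law \eqref{ENL} gives, for every $t$ for which the solution exists,
\[
\mathcal{E}(0)-\mathcal{E}(t)\leq \mathcal{E}(0)-E(d(t))\leq \mathcal{E}(0)-E(d^*)=\frac12\|v_0\|^2+\big(E(d_0)-E(d^*)\big).
\]
Writing $E(d_0)-E(d^*)=\frac12\big(\|\nabla d_0\|^2-\|\nabla d^*\|^2\big)+\int_Q\big(F(d_0)-F(d^*)\big)dx$ and using that $F'=f$ is bounded in $L^\infty$ along the segment joining $d_0$ and $d^*$ (again by $H^2\hookrightarrow L^\infty$ and $\sigma\leq 1$), one gets $|E(d_0)-E(d^*)|\leq C(d^*)\|d_0-d^*\|_{H^1}$, hence $\mathcal{E}(0)-\mathcal{E}(t)\leq \frac12\sigma^2+C(d^*)\sigma\leq C'(d^*)\,\sigma$. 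Choosing finally $\sigma:=\min\{1,\ \varepsilon_0/(2C'(d^*))\}$ forces $\mathcal{E}(t)>\mathcal{E}(0)-\varepsilon_0$ for all such $t$, so no time $T_*$ as in alternative $(a)$ can exist; Theorem~\ref{sm} then delivers alternative $(b)$, which is exactly the assertion of Corollary~\ref{near}.

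I do not anticipate a genuine obstacle: the substantive content — local well-posedness extracted from the $3D$ higher-order inequality \eqref{Eins}, together with the energy-drop dichotomy itself — is already carried out in Theorem~\ref{sm}. The only delicate points are bookkeeping ones: the constants must be selected in the correct order ($R$ from $d^*$ with $\sigma\leq 1$, then $\varepsilon_0$ from Theorem~\ref{sm}, then $\sigma$ shrunk in terms of $\varepsilon_0$), and one must verify that the energy-drop bound above holds throughout the maximal existence interval, since this is precisely what is needed to exclude case $(a)$ and thereby trigger the global continuation argument in Step~2 of the proof of Theorem~\ref{sm}.
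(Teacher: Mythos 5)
Your proposal is correct and follows essentially the same route as the paper: bound $A(0)=\|\nabla v_0\|^2+\|\Delta d_0-f(d_0)\|^2$ by a constant $R$ depending only on $d^*$ (the paper does this via $\Delta d^*=f(d^*)$, you do it by a direct $H^2\hookrightarrow L^\infty$ bound — both work), estimate the total energy drop by $\mathcal{E}(0)-E(d^*)\leq C(d^*)\sigma$ using that $d^*$ is an absolute minimizer, and shrink $\sigma$ to rule out alternative $(a)$ of Theorem \ref{sm}. The only cosmetic difference is the order of selection — you take $\varepsilon_0$ from Theorem \ref{sm} first and then shrink $\sigma$, while the paper sets $\varepsilon_0:=K_2\sigma$ and constrains $\sigma$ so that this $\varepsilon_0$ satisfies the smallness condition in the proof of Theorem \ref{sm}; the two are equivalent.
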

\begin{proof}
Without loss of generality, we assume that $\sigma \leq 1$. From the
assumption $\|v_0\|_{H^1}+\|d_0-d^\ast\|_{H^2} \leq \sigma\leq 1$,
we infer that
\begin{eqnarray}
 \|v_0\|^2_{H^1}+\|\Delta d_0-f(d_0)\|^2
 &\leq&
\|v_0\|^2_{H^1}+2\|\Delta d_0-\Delta d^*\|^2+2\|f(d_0)-f(d^*)\|^2 \non\\
&\leq& K_1(\|v_0\|_{H^1}+\|d_0-d^*\|_{H^2})^2 \leq  K_1.
\end{eqnarray}
 In addition, since $d^*$ is the absolute minimizer of $E(d)$,
we have
\begin{eqnarray*}
 \mathcal{E}(0)-\mathcal{E}(t)&\leq& \mathcal{E}(0)- E(d(t))\leq
 \mathcal{E}(0)-E(d^*)\nonumber
\\ &\leq& \frac12\|v_0\|^2+ \frac12\left(\|\nabla d_0\|^2-\|\nabla d^*\|^2\right)+ \int_Q
F(d_0)-F(d^*) dx\nonumber\\
& \leq & \frac12 \sigma^2+ C\sigma\leq   K_2\sigma.
\end{eqnarray*}
 Here $K_1$ and $K_2$ are positive constants that only depend on
 $d^*$, $\nu$, $f$ (not on $\sigma$).

Now we take
\begin{equation}
 R=K_1,\quad \varepsilon_0=K_2\sigma,\quad \sigma=\min\left\{1,\ \frac{K_1}{4K_2}T_{max}(K_1,C_*)\min\{1,\nu\}\right\},\label{small}
 \end{equation}
 then the conclusion follows from  Theorem \ref{sm}.
\end{proof}

Corollary \ref{near} implies that if the initial velocity $v_0$ is
small in $H^1$ and the initial director field $d_0$ is properly
close to the absolute minimizer $d^*$ of the functional $E(d)$ in
$H^2$, problem \eqref{1}--\eqref{5} admits a unique global classical
solution. However, from the proof of Theorem \ref{sm} we can
somewhat relax the "smallness" requirement on $(v_0,d_0)$ from
$H^1\times H^2$ to $L^2\times H^1$.

\begin{corollary}\label{near1}
 Let $d^*\in H_p^2(Q)$ be an absolute minimizer of the functional
 $ E(d)$.
 For any initial data $(v_0, d_0)\in V\times H_p^2(Q)$, there exists
 a constant $\sigma\in (0,1]$, which depends on $\nu$, $f$, $Q$,
 $d^*$, $\|v_0\|_{H^1}$ and $\|d_0\|_{H^2}$ such that if
 $\|v_0\|+\|d_0-d^*\|_{H^1}\leq\sigma$,
  problem
 \eqref{1}--\eqref{5} admits a unique global classical solution.
 \end{corollary}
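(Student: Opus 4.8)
The plan is to deduce Corollary \ref{near1} from Theorem \ref{sm}, exactly in the spirit of Corollary \ref{near}; the only new feature is that the smallness threshold $\sigma$ is now permitted to depend on the (possibly large) higher-order norms $\|v_0\|_{H^1}$ and $\|d_0\|_{H^2}$. First I would fix
\[
R:=\|\nabla v_0\|^2+\|\Delta d_0-f(d_0)\|^2,
\]
which is finite for $(v_0,d_0)\in V\times H^2_p(Q)$ and depends only on $\|v_0\|_{H^1}$ and $\|d_0\|_{H^2}$ (the term $\|f(d_0)\|$ being controlled through $H^2_p(Q)\hookrightarrow L^\infty$). Theorem \ref{sm}, applied with this $R$, then supplies a constant $\varepsilon_0=\varepsilon_0(\nu,f,Q,R)\in(0,1)$ and the dichotomy: either alternative $(a)$ (a definite energy drop by $\varepsilon_0$ at some finite time) holds, or alternative $(b)$ (a unique global classical solution with a uniform $H^1\times H^2$ bound) holds. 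The whole task is to choose $\sigma$ small enough that $(a)$ is ruled out.

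To exclude $(a)$ I would use two facts that hold along the solution while it exists: by the basic energy law \eqref{ENL} the Lyapunov functional $\mathcal{E}$ is non-increasing, and since $d^*$ is an absolute minimizer of $E$ we have $E(d(t))\geq E(d^*)$; as $\mathcal{E}(t)=\frac12\|v(t)\|^2+E(d(t))\geq E(d(t))$, this gives
\[
\mathcal{E}(0)-\mathcal{E}(t)\ \leq\ \mathcal{E}(0)-E(d(t))\ \leq\ \mathcal{E}(0)-E(d^*)
\]
throughout the interval of existence. Hence it suffices to bound the fixed quantity $\mathcal{E}(0)-E(d^*)$ by something tending to $0$ with $\sigma$. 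Writing
\[
\mathcal{E}(0)-E(d^*)=\frac12\|v_0\|^2+\frac12\big(\|\nabla d_0\|^2-\|\nabla d^*\|^2\big)+\int_Q\big(F(d_0)-F(d^*)\big)\,dx,
\]
I would estimate the velocity term by $\frac12\sigma^2$; the Dirichlet part by $\frac12\|\nabla(d_0-d^*)\|^2+\|\nabla d^*\|\,\|\nabla(d_0-d^*)\|\leq\frac12\sigma^2+\|\nabla d^*\|\,\sigma$; and the potential part by the mean value theorem, $\int_Q|F(d_0)-F(d^*)|\,dx\leq\|f(\xi)\|_{L^\infty}\|d_0-d^*\|_{L^1}\leq C\|d_0-d^*\|\leq C\sigma$, where $\xi$ lies pointwise between $d_0$ and $d^*$ so that $\|\xi\|_{L^\infty}$, hence $\|f(\xi)\|_{L^\infty}$, is controlled by $\|d_0\|_{H^2}$ and $d^*$. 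Altogether $\mathcal{E}(0)-E(d^*)\leq K\sigma$ with $K=K(\nu,f,Q,d^*,\|d_0\|_{H^2})$ independent of $\sigma\in(0,1]$.

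Finally I would choose $\sigma\in(0,1]$ so small that $K\sigma<\varepsilon_0$. Then $\mathcal{E}(0)-\mathcal{E}(t)<\varepsilon_0$ at every time the solution exists, so alternative $(a)$ of Theorem \ref{sm} cannot occur; alternative $(b)$ therefore holds, yielding the unique global classical solution (uniqueness being in any case a consequence of Lemma \ref{cd}). There is no serious analytic obstacle here; the one point requiring care is purely bookkeeping — checking that $K$ does not secretly depend on $\sigma$ (it does not, since $\sigma\leq1$ bounds $\|d_0-d^*\|_{H^1}$ and the remaining dependence is on the fixed data $\|d_0\|_{H^2}$ and on $d^*$), and observing that it is exactly the dependence of $R$, hence of $\varepsilon_0$, hence of $\sigma$, on $\|v_0\|_{H^1}$ and $\|d_0\|_{H^2}$ that produces the data-dependent threshold here, in contrast to Corollary \ref{near} where $R$ could be taken as an absolute constant.
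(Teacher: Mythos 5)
Your proposal is correct and follows essentially the same route as the paper: take $R=\|\nabla v_0\|^2+\|\Delta d_0-f(d_0)\|^2$ (the paper's $K_1$, which is what forces $\sigma$ to depend on $\|v_0\|_{H^1}$ and $\|d_0\|_{H^2}$), bound the total energy drop $\mathcal{E}(0)-\mathcal{E}(t)\leq\mathcal{E}(0)-E(d^*)\leq K\sigma$ using that $d^*$ is an absolute minimizer, and shrink $\sigma$ to rule out alternative $(a)$ of Theorem \ref{sm}. The only cosmetic difference is that the paper sets $\varepsilon_0=K_2\sigma$ and then imposes the constraint from Theorem \ref{sm} on $\sigma$, whereas you fix $\varepsilon_0$ first and require $K\sigma<\varepsilon_0$; these are equivalent.
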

 \begin{proof}
 Without loss of generality, we assume that $\sigma\leq1$. Set
 $K_1:=\|\nabla v_0\|^2+\|\Delta d_0-f(d_0)\|^2<\infty$ (unlike in Corollary
 \ref{near}, now $K_1$ depends on $\|v_0\|_{H^1}$ and
 $\|d_0\|_{H^2}$). And we have $\mathcal{E}(0)-\mathcal{E}(t)\leq K_2\sigma$,
where $K_2$ is a positive constant only depending on
 $d^*$, $\nu$, $f$ (not on $\sigma$). As in the proof of Corollary \ref{near}, we take $R=K_1$,
$\varepsilon_0=K_2\sigma$ and $
 \sigma=\min\left\{1,\
 \frac{K_1}{4K_2}T_{max}(K_1,C_*)\min\{1,\nu\}\right\}$.
 The conclusion follows from Theorem \ref{sm}. Here, we note that now
 $\sigma$ depends on $\|v_0\|_{H^1}$ and $\|d_0\|_{H^2}$ (because of
 $K_1$) while in Corollary \ref{near}, $\sigma$ does not.
 \end{proof}

By so far, we discussed global existence results when the initial
data are assumed to be close to certain equilibria. On the other
hand, concerning arbitrary initial data in $3D$, the global
well-posedness can still be obtained if we assume that the viscosity
is properly large (cf. \cite{LL95} for small molecule system and
\cite{SL08} for the case $\alpha=1$). The (constant) viscosity $\nu$ plays an
essential role in the proof that the largeness of $\nu$ will
guarantee the existence of global strong solutions. To see this
point, we show that the following higher-order differential
inequality holds uniformly in $\alpha\in [0,1]$.

\bl \label{he3da} In $3D$ case, for arbitrary $\nu_0>0$, if $\nu\geq
\nu_0>0$, then the following inequality holds for the
 classical solution $(v,d)$ to problem \eqref{1}--\eqref{5}:
 \be \frac{d}{dt}\tilde{A}(t) \leq
 -\left(\nu-M_1\nu^\frac12\tilde{A}(t)\right)\|\Delta v\|^2-\left(1-M_1 \nu^{-\frac14} \tilde{A}(t)\right)
 \|\nabla(\Delta d-f(d))\|^2+M_2A(t),\label{he3d}
 \ee
 where $\tilde{A}(t)=A(t)+1$ (cf. \eqref{A} for the definition of $A(t)$), $M_1,M_2$ are positive
  constants depending on $f, |Q|, \|v_0\|, \|d_0
 \|_{H^1}$, $M_2$  also depends on $\nu_0$, the lower bound of $\nu$.
 \el
 \begin{proof} We re-estimate the terms on the right-hand-side of \eqref{3r1a} in a different way.
 Notice that we still have the uniform $L^2\times H^1$ estimate for the solution $(v, d)$ as
 before. Then we have
\bea |I_1|
 &\leq& \|v\|_{L^4}\|\nabla v\|_{L^4}\|\Delta v\| \leq C\|v\|^{\frac{1}{4}}\|\nabla v\|^{\frac{3}{4}}\|\nabla
v\|^{\frac{1}{4}}\|\Delta v\|^{\frac{3}{4}}\|\Delta v\| \non\\
&\leq& {\nu}^{\frac{1}{2}}\|\Delta v\|^2+{\nu}^{\frac{1}{2}}\|\nabla
v\|^2\|\Delta v\|^2+C{\nu}^{-\frac{7}{2}}\|\nabla v\|^2.\non
 \eea
 It follows from \eqref{ii34}, \eqref{ii67}, \eqref{ii5b} and \eqref{ii8} that all the five terms $|I_3'|, |I_4'|, |I_{5b}|, |I_6'|, |I_7'|$ can be bounded by $C(\|\Delta d-f\|^{\frac12}\|\nabla(\Delta
d-f)\|^{\frac12}+\|\Delta
d-f\|)(\|\Delta d -f\|+1)\|\Delta v\|$. Therefore, 
\bea
&&|I_3'|+ |I_4'|+ |I_{5b}|+ |I_6'|+ |I_7'| 
\non\\
&\leq&
C(\|\Delta d-f\|^{\frac12}\|\nabla(\Delta
d-f)\|^{\frac12}+\|\Delta
d-f\|)(\|\Delta d -f\|+1)\|\Delta v\|\non\\
 &\leq&
 \nu^\frac12 (1+ \|\Delta d-f\|^2)\|\Delta v\|^2+  \nu^{-\frac14}(1+\|\Delta d
 -f\|^2) \|\nabla(\Delta d-f)\|^2\non\\
 && +C\left(1+\frac{1}{\nu^{\frac12}}+\frac{1}{\nu^{\frac34}}\right)\|\Delta d -f\|^2.\non
 \eea
Next,
\bea
 |I_{5c}'| &\leq&  \|f'(d)\|_{L^3}\|\Delta d-f\|_{L^6}\|\Delta
 d-f\|\leq  C(\|\nabla(\Delta d-f)\|+\|\Delta d-f\|)\|\Delta d-f\| \non\\
&\leq& \frac{1}{16} \|\nabla(\Delta d-f)\|^2+C\|\Delta d-f\|^2.\non
 \eea
 \bea |I_8'| &\leq& \|\Delta
d-f\|_{L^6}\|f'(d)\|_{L^3}\|d\|_{L^\infty}\|\nabla v\| \non\\
&\leq& C(\|\nabla(\Delta d-f)\| +\|\Delta d-f\|) (\|\Delta
d-f\|+1)\|\Delta
v\| \non\\
&\leq& \nu^\frac12 (1+ \|\Delta d-f\|^2)\|\Delta v\|^2+  \nu^{-\frac14}(1+\|\Delta d
 -f\|^2) \|\nabla(\Delta d-f)\|^2\non\\
 && +C\left(1+\frac{1}{\nu^{\frac12}}+\frac{1}{\nu^{\frac34}}\right)\|\Delta d -f\|^2.\non
  \eea
Putting all the above estimates together, we arrive at \eqref{he3d}.
\end{proof}

In what follows, we proceed to prove Theorem 1.2 and Theorem 1.3.

\begin{lemma}
\label{AA1} In $3D$ case, for the unique strong solution $(v,d)$
obtained in Theorem \ref{glo}, Corollary \ref{near} and
Corollary \ref{near1}, it holds
 \be \lim_{t\rightarrow+\infty} A(t)=0. \label{AAc}
 \ee
\end{lemma}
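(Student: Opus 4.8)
The plan is to follow the decay argument behind the $2D$ result Lemma \ref{vcon}, with the $3D$ higher-order inequality Lemma \ref{highor3d} playing the role of Lemma \ref{he2d}. Three facts will be combined: time-integrability of $A$, a uniform-in-time bound on $A$, and a one-sided differential inequality $\frac{d}{dt}A\le C_6 A$; together these force $A(t)\to 0$.

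First I would integrate the basic energy law \eqref{ENL} over $(0,\infty)$ to get
\[
\int_0^{+\infty} A(\tau)\,d\tau\le \frac{1}{\min\{1,\nu\}}\int_0^{+\infty}\!\big(\nu\|\nabla v(\tau)\|^2+\|\Delta d(\tau)-f(d(\tau))\|^2\big)\,d\tau\le \frac{\mathcal{E}(0)}{\min\{1,\nu\}}<+\infty,
\]
so $A\in L^1(0,+\infty)$. Next I would invoke the uniform bound attached to each of the three existence statements: for the large-viscosity case of Theorem \ref{glo} one has $v\in L^\infty(0,\infty;V)$ and $d\in L^\infty(0,\infty;H^2)$, while for the near-minimizer cases Corollary \ref{near} and Corollary \ref{near1} the proof through Theorem \ref{sm} furnishes directly the estimate \eqref{bdA}. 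In all cases, the embedding $H^2_p(Q)\hookrightarrow L^6(Q)$ (valid for $n=3$) together with the cubic growth of $f$ gives a constant $M>0$, independent of $t$, with $A(t)\le M$ for every $t\ge 0$.

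Plugging this bound into Lemma \ref{highor3d}, dropping the nonnegative terms $\nu\|\Delta v\|^2+\|\nabla(\Delta d-f(d))\|^2$ on the left and using $A^4=A^3\cdot A\le M^3 A$ on the right, I would obtain
\[
\frac{d}{dt}A(t)\le C_{\ast}\big(A(t)^4+A(t)\big)\le C_{\ast}(M^3+1)\,A(t)=:C_6\,A(t),\qquad \forall\ t\ge 0,
\]
which is the $3D$ analogue of \eqref{simplified higher-order inequality}. The conclusion \eqref{AAc} then follows from the analysis lemma \cite[Lemma 6.2.1]{Z04}: a nonnegative, locally absolutely continuous function with $A\in L^1(0,\infty)$ and $\frac{d}{dt}A\le C_6 A$ tends to $0$ as $t\to\infty$. (Indeed $e^{-C_6 t}A(t)$ is nonincreasing, so were $A(t_n)\ge\varepsilon$ along some $t_n\uparrow\infty$ one would have $A\ge\varepsilon e^{-C_6}$ on every interval $[t_n-1,t_n]$, and passing to a subsequence making these intervals pairwise disjoint would contradict $A\in L^1$.)

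There is no genuine obstruction in this proof; the only point that needs care is the bookkeeping in the second step, namely identifying correctly the source of the uniform bound $A(t)\le M$ in each of the three cases (the $L^\infty$-in-time regularity coming from Theorem \ref{glo} for large $\nu$, and estimate \eqref{bdA} from Theorem \ref{sm} for the two cases of data near an absolute minimizer). Once that is in place, the argument is essentially a verbatim repetition of the reasoning used for Lemma \ref{vcon} in the $2D$ setting.
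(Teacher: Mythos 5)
Your proof is correct and follows essentially the same route as the paper: integrability of $A$ from the basic energy law, a uniform-in-time bound on $A$, and the higher-order differential inequality giving one-sided control of $\frac{d}{dt}A(t)$, combined via the analysis lemma of \cite{Z04}. The only cosmetic differences are that in the large-viscosity case the paper rederives the uniform bound on $\tilde{A}(t)$ from Lemma \ref{he3da} and condition \eqref{nu} rather than quoting the $L^\infty$-in-time regularity stated in Theorem \ref{glo}, and that it uses the cruder consequence $\frac{d}{dt}A(t)\le C$ in place of your $\frac{d}{dt}A(t)\le C_6 A(t)$.
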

\begin{proof}
 (1) \textbf{Near equilibrium case}. When the assumptions
 in Theorem \ref{sm} (or Corollary \ref{near} / Corollary \ref{near1}) are satisfied, we
 have \eqref{bdA} holds for all $t\geq0$. Thus, it follows from
 \eqref{Eins} that for all $t\geq 0$,
 $$ \frac{d}{dt}A(t)\leq C_*(A^4(t)+A(t)) \leq
 C.$$ Recalling the fact that  $A(t)\in L^1(0,+\infty)$
(cf. \eqref{intA}), we conclude \eqref{AAc}.

 (2) \textbf{Large viscosity case}. We infer from \eqref{intA} that $$\int_t^{t+1}\tilde{A}(t)
dt=\int_t^{t+1}A(t) dt+1 \leq \int_0^{+\infty}A(t) dt
+1:=\tilde{M}.$$ Based on Lemma \ref{he3da}, if we assume that the
viscosity $\nu$ satisfies the following relation
 \be \nu^\frac14\geq \nu_0^\frac14:=
M_1(\tilde{A}(0)+M_2\tilde{M}+4\tilde{M})+1,\label{nu}
 \ee
by applying the same idea as in \cite{LL95,LW08}, we can show that
 $ \tilde{A}(t)$ uniformly bounded for all $t\geq 0$. Moreover, in this case, for $t\geq 0$, we have $\left(\nu-M_1\nu^\frac12\tilde{A}(t)\right)\geq 0$ and
  $\left(1-\frac{M_1}{\nu^\frac14}\tilde{A}(t)\right)\geq 0$ and it follows from \eqref{he3d} that
\[ \frac{d}{dt}A(t)=\frac{d}{dt}\tilde{A}(t) \leq
M_2A(t) \leq C.\] By the same argument as in (1), we obtain
\eqref{AAc}.  The proof is complete.
\end{proof}

\textbf{Proof of Theorem 1.2 and Theorem 1.3.} Based on Lemma
\ref{AA1}, for both (1) near equilibrium case and (2) large
viscosity case, one can argue exactly as in Section 3.2 to conclude
that
 \be \lim_{t\rightarrow +\infty}
 (\|v(t)\|_{H^1}+\|d(t)-d_\infty\|_{H^2})=0.\label{cgce3d}
 \ee
Since we have obtained uniform bounds for $\|v(t)\|_{V}$ and
$\|d(t)\|_{H^2}$, we are able to show the estimate on convergence
rate \eqref{rate} for both cases. In order to see this, one can
check the argument for $2D$ case step by step. By applying
corresponding Sobolev embedding theorems in $3D$, we can see that
all calculations in Section 3.3 are valid (the details are omitted
here). We complete the proof for Theorem \ref{main3da} and Theorem
\ref{main3d}. $\square$
\bigskip

\section{Remark on Liquid Crystal Flows with Non-vanishing Average Velocity}
\setcounter{equation}{0}

 We briefly
discuss the flows with non-vanishing average velocity. Due to the
periodic boundary condition \eqref{4}, by integration of
 \eqref{1} over $Q$, we get
 \be \frac{d}{dt}\left(\frac{1}{|Q|}\int_Q v(t) dx\right)=0,\ee
 which implies
 \be m_v:=\frac{1}{|Q|}\int_Q v(t) dx\equiv\frac{1}{|Q|}\int_Q
 v_0
 dx,\quad \forall\ t\geq 0,\label{mv}
 \ee
 where $|Q|$ is the measure of $Q$.

 We note that our main results (Theorems 1.1--1.3) are valid for the flow with vanishing average velocity
 (see the definition of the function space $V$), namely, $m_v=0$.
 In that case, we can apply the Poincar\'{e} inequality to
 $v\in V$ such that $\|v\|\leq C\|\nabla v\|$. This enables us to
 show the convergence property of the fluid velocity as well as the director field
  as $t\to +\infty$, due to the dissipative mechanism of system
 \eqref{1}--\eqref{5}.

 When a flow with non-vanishing average velocity $v$ is considered, as for the single Navier--Stokes equation (cf. \cite{Te}),
  we set
  \be
  v=\tilde{v}+m_v.\label{tran}
  \ee
  Then we transform problem \eqref{1}--\eqref{5} into the following system for new variables
 ($\tilde{v}, d$):
 \bea
 && \tilde{v}_t+\tilde{v}\cdot\nabla \tilde{v}-\nu \Delta \tilde{v}+m_v\cdot \nabla \tilde{v}+\nabla P\non\\
 &&\ =-\lambda
 \nabla\cdot[\nabla d\odot\nabla d+\alpha(\Delta d-f(d))\otimes d-(1-\alpha)d\otimes(\Delta d-f(d))\ \!],\label{1b}\ \!\\
 && \nabla \cdot \tilde{v} = 0,\label{2b}\\
 && d_t+\tilde{v}\cdot\nabla d+ m_v\cdot \nabla d-\alpha (\nabla \tilde{v})d +(1-\alpha)(\nabla^T \tilde{v})d
  =\gamma(\Delta d-f(d)),\label{3b}
 \eea
 subject to the corresponding periodic boundary conditions and initial conditions
 \bea
 &&\tilde{v}(x+e_i)=\tilde{v}(x),\quad d(x+e_i)=d(x),\qquad \text{for}\ x\in \mathbb{R}^n,
 \label{4b}\\
 &&\tilde{v}|_{t=0}=\tilde{v}_0(x)=v_0(x)-m_v, \ \ \text{with}\
 \nabla\cdot \tilde{v}_0=0,\ \
 d|_{t=0}=d_0(x),\ \  \text{for}\ x\in Q.\label{5b}
 \eea
 It is not difficult to check that system \eqref{1b}--\eqref{5b}
 still
 enjoys the  \textit{basic energy law}
 $$
 \frac{d}{dt}\tilde{\mathcal{E}}(t)=-\nu\|\nabla \tilde{v}\|^2-\lambda\gamma\|\Delta
 d-f(d)\|^2,
 \quad \forall \ t\geq 0,
 $$
 where
 $$ \tilde{\mathcal{E}}(t)=\frac{1}{2}\|\tilde{v}\|^2+\frac{\lambda}{2}\|\nabla
 d\|^2+\lambda\int_Q F(d)dx.
 $$
 By a similar argument, we can still prove the global existence and uniqueness of classical
 solutions $(\tilde{v}, d)$ to problem \eqref{1b}--\eqref{5b} under the same
 assumptions as in Theorem \ref{glo}, Theorem \ref{sm}, Corollary \ref{near} and Corollary \ref{near1}.
 Moreover, we can prove the same higher-order energy inequalities like
 Lemma \ref{he2d}, Lemma \ref{highor3d} and Lemma \ref{he3da} for $(\tilde{v}, d)$.

 As far as the
 long-time behavior of the global solution is concerned, following
 a similar argument in the previous sections, we can conclude that
 \be \lim_{t\rightarrow+\infty}(\|\tilde{v}(t)\|_{H^1}+\|\Delta
 d(t)-f(d(t))\|)=0.\label{imvd}
 \ee
 Recalling \eqref{tran}, we infer from \eqref{imvd} that $$
 \lim_{t\rightarrow+\infty}\|v(t)-m_v\|_{H^1}=0.$$
 However, in general we are not able to conclude similar results on the convergence of $d$ like in Theorems 1.1--1.3.
 \eqref{imvd} implies that the 'limit' function of $d(t)$ as $t\to +\infty$, which is denoted by $\hat d$, will
 satisfy $\Delta \hat d-f(\hat d)=0$ with corresponding periodic boundary
 condition. Let us look at the 'limiting' case such that $v=\hat v = m_v$
 and $d=\hat d$. It follows from \eqref{5} that
 \be \frac{D}{Dt}\hat d=\hat d_t+\hat v\cdot \nabla \hat d=0.
 \ee
 Consequently, $\hat d$ is purely transported and it $(i.e., \hat d(x(X,t),t))$ remains
 unchanged when the molecule moves through a flow
 field with velocity $m_v$. However, the local rate of change $\hat d_t$ may not be zero, because the convective rate of
 change may not vanish. Hence, in the Eulerian coordinates, or in $Q$, $\hat d(x, t)$ may change in time. As a result,
 there might be no steady state for the director field. Obviously, this is different from the
 situation in the previous sections, where all the three rates of change are vanishing in the limiting
 case. We can look at a simple example. In the case of periodic boundary conditions, let $\hat
 v=(1,0)$ and $\hat d(x,0)=\hat d_0(x)$ for $x\in Q $. We can see that in the Eulerian coordinates,
 $\hat d(x,t)$ is a periodic function in time such that for $t\geq 0$, $\hat
 d(x,t)=\hat d(x,t+1)$ with $T=1$ being the period.

\bigskip

\noindent\textbf{Acknowledgements.}  H. Wu was partially supported
by Natural Science Foundation of Shanghai 10ZR1403800. X. Xu and C.
Liu were partially supported by National Science Foundation grant
NSF-DMS 0707594. The authors would like to thank Professors F.-H.
Lin and M.C. Calderer for their helpful suggestions.

\end{document}